%TODO later or never
% - revise the final section of experimental data
% - split the left multiplication part out of FroidurePin, and Closure (but
%   ConcurrentFroidurePin).

\documentclass{article}
\usepackage{amsmath, amssymb, amsthm, tocloft, enumerate, tikz, algorithmicx,
            algpseudocode, mathrsfs, color, comment, xspace, cite}
\usepackage[top=50pt,bottom=60pt,left=78pt,right=76pt]{geometry}
\usepackage[ruled, vlined, linesnumbered]{algorithm2e}
\usepackage{tikz-cd}
\usepackage[colorlinks]{hyperref}
\hypersetup{linkcolor=blue, urlcolor=blue, citecolor=red}

\algrenewcommand{\algorithmiccomment}[1]{\hfill[{\it #1}]}
\newcommand{\N}{\mathbb{N}}

\renewcommand{\ker}{\operatorname{ker}}

\newcommand{\J}{\mathscr{J}}

\newcommand{\set}[2]{\{#1:#2\}}
\newcommand{\dom}{\operatorname{dom}}
\newcommand{\im}{\operatorname{im}}
\newcommand{\genset}[1]{\langle#1\rangle}

\renewcommand{\to}{\longrightarrow}

\newcommand{\GAP}{{\sc GAP}\xspace}

\newcommand{\libsemigroups}{{\sc libsemigroups}\xspace}

\numberwithin{equation}{section}

\newtheorem{prop}[equation]{Proposition}
\newtheorem{lem}[equation]{Lemma}
\newtheorem{cor}[equation]{Corollary}
\newtheorem{defn}[equation]{Definition}
\newtheorem{exam}[equation]{Example}
\newtheorem*{assumption}{Assumptions}
\newenvironment{de}{\begin{defn}\rm}{\end{defn}}

\begin{document}
\title{Two variants of the Froiduire-Pin Algorithm for finite semigroups}
\author{J. Jonu\v{s}as, J. D. Mitchell, and M. Pfeiffer}
\maketitle

\begin{abstract}
  In this paper, we present two algorithms based on the Froidure-Pin Algorithm
  for computing the structure of a finite semigroup from a generating set.  
  As was the case with the original algorithm of Froidure and Pin, the
  algorithms presented here produce the left and right Cayley graphs, a
  confluent terminating rewriting system, and a reduced word of the rewriting
  system for every element of the semigroup.
  
  If $U$ is any semigroup, and $A$ is a subset of $U$, then we denote by
  $\langle A\rangle$ the least subsemigroup of $U$ containing $A$. 
  If $B$ is any other subset of $U$, then, roughly speaking, the first
  algorithm we present describes how to use any information about $\langle
  A\rangle$, that has been found using the Froidure-Pin Algorithm, to compute
  the semigroup $\langle A\cup B\rangle$.  More precisely, we describe the data
  structure for a finite semigroup $S$ given by Froidure and Pin, and how to
  obtain such a data structure for $\langle A\cup B\rangle$ from that for
  $\langle A\rangle$.  The second algorithm is a lock-free concurrent version
  of the Froidure-Pin Algorithm. 
\end{abstract}

%%%%%%%%%%%%%%%%%%%%%%%%%%%%%%%%%%%%%%%%%%%%%%%%%%%%%%%%%%%%%%%%%%%%%%%%%%%%%%%%
%%%%%%%%%%%%%%%%%%%%%%%%%%%%%%%%%%%%%%%%%%%%%%%%%%%%%%%%%%%%%%%%%%%%%%%%%%%%%%%%

\section{Introduction}

A \textit{semigroup} is just a set $U$ together with an associative binary
operation. If $A$ is a subset of a semigroup $U$, then we denote by
$\genset{A}$ the smallest subsemigroup of $U$ containing $A$, and refer to $A$
as the \textit{generators} of $\genset{A}$. If $A$ and $B$ are subsets of $U$,
then we write $\genset{A, B}$ rather than $\genset{A \cup B}$. The question of
determining the structure of the semigroup $\genset{A}$ given the set of
generators $A$ has a relatively long history; see the introductions
of~\cite{East2015ab} or~\cite{Froidure1997aa} for more details.

In~\cite{Froidure1997aa} the authors present an algorithm for computing a
finite semigroup; we refer to this as the \textit{Froidure-Pin Algorithm}. More
precisely, given a set $A$ of generators belonging to a larger semigroup $U$,
the Froidure-Pin Algorithm simultaneously enumerates the elements, produces the
left and right Cayley graphs, a confluent terminating rewriting system, and a
reduced word of the rewriting system for every element of $\genset{A}$. The
Froidure-Pin Algorithm is perhaps the first algorithm for computing an
arbitrary finite semigroup and is still one of the most powerful, at least for
certain types of semigroup.  Earlier algorithms, such as those
in~\cite{Konieczny1994aa, Lallement1990aa}, often only applied to specific
semigroups, such as those of transformations or Boolean matrices. 
In contrast, the Froidure-Pin Algorithm can be applied to any semigroup where
it is possible to multiply and test equality of the elements. 

Green's relations are one of the most fundamental aspects of the structure of a
semigroup, from both a theoretical and a practical perspective;
see~\cite{Howie1995aa} or~\cite{Rhodes2009aa} for more details. The Green's
structure of a semigroup underlies almost every other structural feature. As
such determining the Green's relations of a finite semigroup is a necessary
first step in most further algorithms. Computing Green's relations is
equivalent to determining strongly connected components in the left and right
Cayley graphs of a semigroup, for which there are several well-known
algorithms, such as Tarjan's or Gabow's. Hence the problem of
determining the Green's relations of a semigroup represented by a generating
set can be reduced to the problem of finding the Cayley graphs. Thus the
performance of algorithms for finding Cayley graphs has a critical
influence on the majority of further algorithms for finite semigroups. 

The Froidure-Pin Algorithm involves determining all of the elements of the
semigroup $\genset{A}$ and storing them in the memory of the computer.  In
certain circumstances, it is possible to fully determine the structure of
$\genset{A}$ without enumerating and storing all of its elements. One such
example is the Schreier-Sims Algorithm for permutation groups;
see~\cite{B.-Eick2004aa,Seress2003ab,Sims1970aa}.  In~\cite{East2015ab}, based
on~\cite{Konieczny1994aa, Lallement1990aa, Linton1998aa}, the Schreier-Sims
Algorithm is utilised to compute any subsemigroup $\genset{A}$ of a regular
semigroup $U$. Of course, this method is most efficient when trying to compute
a semigroup containing relatively large, in some sense, subgroups.  In other
cases, it is not possible to avoid enumerating and storing all of the elements
of $\genset{A}$. For example, if a semigroup $S$ is $\J$-trivial, then the
algorithms from~\cite{East2015ab} enumerate all of the elements of $S$ by
multiplying all the elements by all the generators, with the additional
overheads that the approach in~\cite{East2015ab} entails. For semigroups of
this type, the algorithms in~\cite{East2015ab} perform significantly worse than
the Froidure-Pin Algorithm.

In this paper, we present two algorithms based on the Froidure-Pin Algorithm
from~\cite{Froidure1997aa}.  The first algorithm
(Algorithm~\ref{algorithm-closure} (\textsf{Closure})) can be used to extend
the output of the Froidure-Pin Algorithm for a given semigroup $\genset{A}$, to
compute a supersemigroup $\genset{A, B}$ without recomputing $\genset{A}$.
This algorithm might be useful for several purposes, such as for example: in
combination with Tietze transformations to change generators or relations in a
presentation for $\genset{A}$; finding small or irredundant generating sets for
$\genset{A}$; computing the maximal subsemigroups of certain 
semigroups \cite{Donoven2016aa}.

The second algorithm (Algorithm~\ref{algorithm-parallel}
(\textsf{ConcurrentFroidurePin})) is a lock-free concurrent version of the
Froidure-Pin Algorithm.  Since computer processors are no longer getting
faster, only more numerous, the latter provides a means for fully utilising
contemporary machines for computing finite semigroups. 

If $S$ is a semigroup generated by a set $A$, then the time and space
complexity of the Froduire-Pin Algorithm is $O(|S||A|)$.  Hence for example, in
the case of transformation semigroups of degree $n$, the worst case complexity
is at least $O(n ^ n)$. The algorithms presented here do not improve on the
underlying complexity of the Froduire-Pin Algorithm but offer practical
improvements on the runtime.

Both algorithms are implemented in the C++ library
\libsemigroups~\cite{Mitchell2016ab}, which can be used in both \GAP
\cite{Mitchell2016aa} and Python \cite{Elliot2017aa}.

The paper is organised as follows. Some relevant background material relating
to semigroups is given in Section~\ref{section-prelim}.  In
Section~\ref{section-fropin} we describe the Froidure-Pin Algorithm and prove
that it is valid. While there is much overlap between
Section~\ref{section-fropin} and \cite{Froidure1997aa}, this section is
necessary to prove the validity of Algorithms~\ref{algorithm-closure}
and~\ref{algorithm-parallel}, and because some details are omitted from
\cite{Froidure1997aa}. Additionally, our approach is somewhat different to that
of Froidure and Pin's in~\cite{Froidure1997aa}.  The first of our algorithms,
for computing $\genset{A, B}$ given $\genset{A}$, is described in
Section~\ref{section-closure}, and the lock-free concurrent version of the
Froidure-Pin algorithm is described in Section~\ref{section-parallel}.  
Sections~\ref{section-closure} and~\ref{section-parallel} both contain 
empirical information about the performance of the implementation our
algorithms in \libsemigroups~\cite{Mitchell2016ab}, and our implementation of
Froidure and Pin's original algorithm in \libsemigroups. The
performance of \libsemigroups is roughly the same as that of Pin's
implementation in Semigroupe 2.01~\cite{Pin2009ab}.

\section{Preliminaries}\label{section-prelim}

In this section, we recall some standard notions from the theory of semigroups;
for further details see~\cite{Howie1995aa}.

If $f: X \to Y$ is a function, then $\dom(f) = X$ and $\im(f) = f(X) =
\set{f(x)}{x \in X}$.
If $S$ is a semigroup and $T$ is a subset of $S$, then $T$ is a
\textit{subsemigroup} if $ab \in T$ for all $a, b\in T$.
A semigroup $S$ is a \textit{monoid} if it has an identity element, i.e.\ an
element $1_S\in S$ such that $1_Ss=s1_S=s$ for all $s\in S$.  

If $A$ is a subset of a semigroup $S$, then we denote by
$\genset{A}$ the smallest subsemigroup of $S$ containing $A$, and refer to $A$
as the \textit{generators} or \textit{generating set} of $\genset{A}$. If $A$
is a generating set for a semigroup $S$, then the \textit{right Cayley graph}
of a semigroup $S$ with respect to $A$ is the digraph with vertex set $S$ and
an edge from $s$ to $sa$ for all $s\in S$ and $a\in A$. The \textit{left Cayley
graph} of $S$ is defined analogously. 

A \textit{congruence} $\rho$ on a semigroup $S$ is an equivalence relation on
$S$ which is invariant under the multiplication of $S$. More precisely, if $(x,
y) \in \rho$,  then $(xs, ys), (sx, sy)\in \rho$ for all $s\in S$. A
\textit{homomorphism} from a semigroup $S$ to a semigroup $T$ is just a
function $f: S\to T$ such that $f(xy) = f(x)\ f(y)$ for all $x,y\in S$. 
If $\rho$ is a congruence on a semigroup $S$, then the \textit{quotient}
$S/\rho$ of $S$ by $\rho$ is the semigroup consisting of the equivalence
classes $\set{s/\rho}{s\in S}$ of $\rho$ with the operation
$$x/\rho\ y/\rho = (xy)/\rho$$
for all $x,y\in S$.  If $f: S\to T$ is a homomorphism of semigroups, then 
the \textit{kernel} of $f$ is
$$\ker(f) = \set{(x,y)\in S\times S}{f(x) = f(y)},$$
and this is a congruence. There is a natural isomorphism between
$S/\ker(f)$ and $\im(f)$ which is a subsemigroup of $T$. 

An \textit{alphabet} is just a finite set $A$ whose elements we refer to as
\textit{letters}. A \textit{word} is just a finite sequence $w = (a_1, \ldots,
a_n)$ of letters $a_1, \ldots, a_n$ in an alphabet $A$. For convenience, we
will write $a_1\cdots a_n$ instead of $(a_1, \ldots, a_n)$.  The
\textit{length} of a word $w = a_1 \cdots a_n \in A^*$ is $n$ and is denoted
$|w|$.  A non-empty \textit{subword} of a word $a_1\cdots a_n$ is a word of the
form $a_ia_{i + 1}\cdots a_j$ where $1\leq i \leq j \leq n$.

The \textit{free semigroup} over the alphabet $A$ is the set of all words over
$A$ with operation the concatenation of words; we denote the free semigroup on
$A$ by $A^+$.   The free semigroup $A ^ +$ has the property that every function
$f: A \to S$, where $S$ is a semigroup, can be uniquely extended to a
homomorphism $\nu: A ^ + \to S$ defined by $\nu(a_1\cdots a_n) = f(a_1)\
f(a_2)\cdots f(a_n)$ for all $a_1\cdots a_n \in A ^ +$.  We denote the monoid
obtained from $A ^ +$ by adjoining the empty word by $A ^ *$; which is called
the \textit{free monoid}.

Suppose that $<$ is a well-ordering of $A$. Then $<$ induces a well-ordering on
$A ^ *$, also denoted by $<$, where  $u < v$ if $|u| < |v|$ or there exist  $p,
u', v' \in A ^ *$ and $a, b\in A$, $a < b$, such that $u = pau'$ and $v =
pbv'$.  We refer to $<$ as the \textit{short-lex} order on $A ^ *$.  Let $S$ be
a semigroup and let $A\subseteq S$. Then there exists a unique homomorphism
$\nu: A ^+ \to S$ that extends the inclusion function from $A$ into $S$. The
word $u\in A ^ +$ such that $$u = \min\set{v\in A^+}{\nu(u)=\nu(v)},$$ with
respect to the short-lex order on $A ^ +$, is referred to as a \textit{reduced
word} for $S$.  For every word $w \in A ^+$, there is a unique reduced word for
$S$, which we denote by $\overline{w}$. 

We require the following results relating to words and the short-lex order $<$. 

\begin{prop}[cf. Proposition 2.1 in \cite{Froidure1997aa}]
  \label{prop-fropin-1}
  Let $A$ be any alphabet, let $u, v, x, y \in A ^*$,  and let $a, b\in A$.
  Then the following hold:
  \begin{enumerate}[\rm (a)]
    \item 
      if $u < v$, then $au < av$ and $ua < va$;
    \item 
      if $ua \leq vb$, then $u \leq v$; 
    \item 
      if $u \leq v$, then $xuy \leq xvy$. 
  \end{enumerate}
\end{prop}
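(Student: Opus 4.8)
The plan is to prove all three parts by unwinding the definition of the short-lex order directly. For $w, z \in A^*$, recall that $w < z$ means either $|w| < |z|$, which I will call the \emph{length case}, or $|w| = |z|$ together with a factorisation $w = pcw'$ and $z = pdz'$ for some $p, w', z' \in A^*$ and $c, d \in A$ with $c < d$, which I will call the \emph{lexicographic case}. Since each claim is an implication about $<$ or $\leq$, the strategy throughout is to split the hypothesis into these two cases and verify that the required inequality survives the prescribed operation, tracking lengths and common prefixes separately.

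For part (a), I would assume $u < v$ and handle the two cases in turn. In the length case $|u| < |v|$, prepending or appending a single letter adds one to each length, so $|au| < |av|$ and $|ua| < |va|$, and the conclusion follows by the length case again. In the lexicographic case, writing $u = pcu'$ and $v = pdv'$ with $c < d$, I would note that $au = (ap)cu'$ and $av = (ap)dv'$ share the prefix $ap$, while $ua = pc(u'a)$ and $va = pd(v'a)$ share the prefix $p$; in both pairs the lengths remain equal and the first point of difference is still $c < d$, so the lexicographic case yields $au < av$ and $ua < va$.

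Part (b) is the one requiring the most care, and I expect it to be the main obstacle, as it is a cancellation statement rather than a monotonicity statement. I would prove the contrapositive: assuming $v < u$, I will show $vb < ua$, contradicting $ua \leq vb$. If $|v| < |u|$ then, since lengths are integers, $|vb| = |v| + 1 \leq |u| < |u| + 1 = |ua|$, so the length case gives $vb < ua$. If instead $|v| = |u|$ with $v = pcv'$ and $u = pdu'$ and $c < d$, then $vb = pc(v'b)$ and $ua = pd(u'a)$ have equal length, share the prefix $p$, and differ first at $c < d$, so again $vb < ua$. The only subtlety is checking that the possibly distinct trailing letters $a$ and $b$ never interfere: in the length case the strict length inequality is already decisive, and in the lexicographic case the first point of difference lies strictly before the appended letters, so those letters are never compared.

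For part (c), I would first dispose of equality: if $u = v$ then $xuy = xvy$ and the claim is immediate, so I may assume $u < v$. In the length case, $|xuy| = |x| + |u| + |y| < |x| + |v| + |y| = |xvy|$ gives the conclusion at once. In the lexicographic case, writing $u = pcu'$ and $v = pdv'$ with $c < d$, the words $xuy = (xp)c(u'y)$ and $xvy = (xp)d(v'y)$ share the prefix $xp$, have equal length, and differ first at $c < d$, so $xuy < xvy$. Alternatively, part (c) follows from part (a) by a routine induction on $|x| + |y|$, multiplying by one letter at a time on each side; I would offer this as the conceptually cleaner route once part (a) is available.
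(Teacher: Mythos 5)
Your proof is correct and complete; the paper itself gives no proof of this proposition (it simply defers to Proposition~2.1 of \cite{Froidure1997aa}), and your direct case analysis on the two clauses of the short-lex definition, with the contrapositive for the cancellation statement (b), is exactly the standard argument one would write out. One small point worth retaining from your write-up: you correctly read the lexicographic clause as applying only when $|u|=|v|$, which the paper's phrasing leaves implicit but which is needed for the order to be antisymmetric and for your case split to be exhaustive.
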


\begin{prop}[cf. Proposition 2.1 in \cite{Sims1994aa}]
  If $S$ is a semigroup, $A\subseteq S$, and $w\in A ^ +$ is
  reduced for $S$, then every non-empty subword of $w$ is reduced. 
\end{prop}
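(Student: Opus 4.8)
The plan is to argue by contradiction, exploiting the minimality built into the definition of a reduced word. Write $w = a_1\cdots a_n$ and suppose, for a contradiction, that some non-empty subword $u = a_ia_{i+1}\cdots a_j$ (with $1 \le i \le j \le n$) fails to be reduced. Set $p = a_1\cdots a_{i-1}$ and $q = a_{j+1}\cdots a_n$, allowing either of these to be the empty word, so that $w = puq$. Since $u$ is not reduced, by definition there is a word $u' \in A^+$ with $\nu(u') = \nu(u)$ and $u' < u$. I would then substitute $u'$ for $u$ inside $w$ and show that the resulting word is both $\nu$-equivalent to $w$ and strictly smaller than $w$, contradicting that $w$ is the short-lex minimum of $\set{v \in A^+}{\nu(v) = \nu(w)}$.

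First I would verify the equivalence. Setting $w' = pu'q$, note that $w' \in A^+$ since $u'$ is non-empty. The homomorphism property of $\nu$ gives $\nu(w') = \nu(p)\,\nu(u')\,\nu(q) = \nu(p)\,\nu(u)\,\nu(q) = \nu(w)$, where any empty factor among $p$ and $q$ is simply omitted from the product. This is precisely why the boundary cases $i = 1$ or $j = n$ require no separate treatment beyond dropping the corresponding factor. Hence $w'$ lies in the same $\nu$-class as $w$.

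The key remaining step, and the one needing the most care, is to show that $w' < w$ \emph{strictly}. Starting from $u' < u$, I would append the letters of $q$ one at a time and then prepend the letters of $p$ one at a time, invoking Proposition~\ref{prop-fropin-1}(a) at each step: the implications $u' < u \Rightarrow u'a < ua$ and $u' < u \Rightarrow au' < au$ preserve the strict inequality under right- and left-multiplication by a single letter, so iterating yields $pu'q < puq$, that is, $w' < w$. Alternatively, one could apply Proposition~\ref{prop-fropin-1}(c) to obtain $w' \le w$ and then rule out equality, since $puq = pu'q$ would force $u = u'$ by cancellativity in the free monoid, contradicting $u' < u$; but the iterated use of part~(a) delivers the strictness directly.

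Finally, $w' < w$ together with $\nu(w') = \nu(w)$ contradicts the fact that $w = \min\set{v \in A^+}{\nu(v) = \nu(w)}$, so no such non-reduced subword $u$ can exist, and every non-empty subword of $w$ is reduced. I expect the only genuinely delicate point to be ensuring that the inequality is strict rather than merely $\le$; the handling of empty prefixes or suffixes is routine once the convention of omitting empty factors from the $\nu$-product is fixed.
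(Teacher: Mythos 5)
Your proof is correct. The paper itself gives no proof of this proposition, deferring to Sims, so there is nothing to compare against; your argument --- replacing a putative non-reduced subword $u$ by a strictly smaller $\nu$-equivalent word $u'$ and propagating $u' < u$ to $pu'q < puq$ by iterated application of Proposition~\ref{prop-fropin-1}(a), contradicting short-lex minimality of $w$ --- is the standard one, and your attention to the strictness of the inequality and to the empty prefix/suffix cases is exactly the care the statement requires.
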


Throughout this paper, we refer to the semigroup $U$ as the
\textit{universe}, and we let $S$ be a subsemigroup of $U$ represented by a
finite set $A$ of generators. The algorithms described herein require that we
can: 
\begin{itemize}
  \item compute the product of two elements in $U$; and
  \item test the equality of elements in $U$.
\end{itemize}

In each of the main algorithms presented in this paper, rather than obtaining
the actual elements of the subsemigroup $S$ we obtain reduced words
representing the elements of $S$. This is solely for the simplicity of the
exposition in this paper, and is equivalent to using the actual elements of $S$
by the following proposition.

\begin{prop}[cf. Proposition 2.3 in \cite{Froidure1997aa}]
   If $S$ is a semigroup, $A\subseteq S$, $\nu: A ^ + \to S$ is the unique
   homomorphism extending the inclusion of $A$ into $S$, then the set $R =
   \set{\overline{w}}{w\in A ^*}$ with the operation defined by $u\cdot v =
   \overline{uv}$ is a semigroup isomorphic to $S$.
\end{prop}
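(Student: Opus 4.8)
The plan is to transport the semigroup structure of $S$ onto $R$ through the evaluation map. I would define $\phi \colon R \to S$ by $\phi(r) = \nu(r)$, where each reduced word $r \in R$ is regarded as an element of $A^+$, and then show that $\phi$ is a bijection satisfying $\phi(r \cdot r') = \phi(r)\,\phi(r')$; from this I will deduce both that $(R, \cdot)$ is a semigroup and that $\phi$ is an isomorphism.

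First I would isolate the single fact about reduced words on which everything rests. Since $\overline{w} = \min\set{v \in A^+}{\nu(v) = \nu(w)}$ by definition, the set of words sent to $\nu(w)$ by $\nu$ is exactly the $\ker(\nu)$-class of $w$, and $\overline{w}$ is its short-lex minimum. From this it follows that, for all $u, v \in A^+$,
$$\overline{u} = \overline{v} \quad\Longleftrightarrow\quad \nu(u) = \nu(v).$$
Indeed, $\nu(\overline{w}) = \nu(w)$ always holds, since both lie in the same class, giving the forward implication; and if $\nu(u) = \nu(v)$ then $u$ and $v$ lie in one $\ker(\nu)$-class, which has a unique minimum, so $\overline{u} = \overline{v}$. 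In particular every $r \in R$ satisfies $\overline{r} = r$.

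With this in hand the three properties of $\phi$ are routine. For surjectivity, note that because $A$ generates $S$ we have $\im(\nu) = \genset{A} = S$, so $\nu$ is onto; given $s \in S$, choose $w$ with $\nu(w) = s$, so that $\overline{w} \in R$ and $\phi(\overline{w}) = \nu(\overline{w}) = \nu(w) = s$. For injectivity, if $\phi(r) = \phi(r')$ with $r, r' \in R$ then $\nu(r) = \nu(r')$, whence $r = \overline{r} = \overline{r'} = r'$ by the displayed equivalence. Finally, for $r, r' \in R$ I would compute
$$\phi(r \cdot r') = \nu(\overline{rr'}) = \nu(rr') = \nu(r)\,\nu(r') = \phi(r)\,\phi(r'),$$
using $\nu(\overline{rr'}) = \nu(rr')$ and that $\nu$ is a homomorphism on $A^+$ (here $rr'$ denotes concatenation).

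It remains to conclude that $(R, \cdot)$ is genuinely a semigroup, and this comes for free. The operation $\cdot$ is well defined because $\overline{rr'}$ is, and associativity can be pulled back along the bijection $\phi$: for $r, r', r'' \in R$ the homomorphism identity together with associativity in $S$ gives $\phi((r \cdot r')\cdot r'') = \phi(r \cdot (r' \cdot r''))$, so injectivity of $\phi$ forces $(r \cdot r') \cdot r'' = r \cdot (r' \cdot r'')$. Hence $R$ is a semigroup and $\phi$ is an isomorphism. I do not anticipate any serious obstacle: the one delicate point is the equivalence $\overline{u} = \overline{v} \iff \nu(u) = \nu(v)$, which must be extracted carefully from the definition of a reduced word as the short-lex minimum of its $\nu$-preimage class; one should also keep in mind that the isomorphism requires $S = \genset{A}$ so that $\nu$ is surjective, and handle the empty-word discrepancy between $A^*$ and $A^+$ consistently.
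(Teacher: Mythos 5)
Your proof is correct. Note, however, that the paper itself gives no proof of this proposition --- it is stated with a pointer to Proposition 2.3 of Froidure and Pin --- so there is nothing internal to compare against; your argument (transporting the structure of $S$ to $R$ via the evaluation map $r \mapsto \nu(r)$, using the equivalence $\overline{u} = \overline{v} \iff \nu(u) = \nu(v)$ to get bijectivity and pulling associativity back through the injection) is the standard one and is what the cited source does. The two caveats you raise are genuine but harmless in context: surjectivity of $\nu$ requires $S = \genset{A}$, which holds throughout the paper (strictly, the statement as written only yields $R \cong \genset{A} = \im(\nu)$); and $\overline{w}$ is only defined for $w \in A^+$, so the $A^*$ in the definition of $R$ should be read as $A^+$ (or the empty word handled separately when $S$ is a monoid with $1_S \in A$).
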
 

%%%%%%%%%%%%%%%%%%%%%%%%%%%%%%%%%%%%%%%%%%%%%%%%%%%%%%%%%%%%%%%%%%%%%%%%%%%%%%%%
%%%%%%%%%%%%%%%%%%%%%%%%%%%%%%%%%%%%%%%%%%%%%%%%%%%%%%%%%%%%%%%%%%%%%%%%%%%%%%%%

\section{The Froidure-Pin Algorithm}\label{section-fropin}

In this section we describe a version of the Froidure-Pin Algorithm from
\cite{Froidure1997aa} and prove that it is valid.

Throughout this section, let $U$ be any semigroup, let $S$ be a subsemigroup of $U$
generated by $A\subseteq U$ where $1_U \in A$, and let $\nu: A ^+ \to S$ be the
unique homomorphism extending the inclusion function of $A$ into $S$.  Since
$\nu(a) = a$ for all $a\in A$, we can compute
$\nu(s)$ for any $s\in A ^ +$ by computing products of elements in $S$.

We require functions $f, l: A^+ \to A$ and  $p, s: A^+ \to A^*$
defined as follows: 
\begin{itemize}
  \item 
    if $w\in A^+$ and $w = au$ for some $a\in A$ and $u\in A^*$, then 
    $f(w) = a$ and $s(w) = u$, i.e. $f(w)$ is the first letter of $w$ and
    $s(w)$ is the suffix of $w$ with length $|w| - 1$;

  \item 
    if $w\in A^+$ and $w = vb$ for some $b\in A$ and $v\in A^*$, then 
    $l(w) = b$ and $p(w) = v$, i.e. $l(w)$ is the last letter of $w$ and
    $p(w)$ is the prefix of $w$ with length $|w| - 1$. 

\end{itemize}
Note that if $w\in A$, then both $p(w)$ and $s(w)$ equal the empty word. 

Next, we give formal definition of the input and output of the version of
the Froidure-Pin Algorithm described in this section. A less formal discussion
of the definition can be found below. 

%%%%%%%%%%%%%%%%%%%%%%%%%%%%%%%%%%%%%%%%%%%%%%%%%%%%%%%%%%%%%%%%%%%%%%%%%%%%%%%%

\begin{de}
  \label{de-data-structure}
  The input of our version of the Froidure-Pin Algorithm is a
  \textit{snapshot of $S$} which is a tuple $(A, Y, K, B, \phi)$
  where:

  \begin{enumerate}[(a)]
      
    \item $A = \{a_1, \ldots, a_r\}$ is a finite collection of
      generators for $S$ where $a_1 < \cdots < a_r$ for some $r\in \N$;
    
    \item $A \subseteq Y = \{y_1, \ldots, y_N\}$ is a
      collection of reduced words for $S$,  $y_1 < y_2 < \cdots <
      y_N$, and if $x\in A^+$ is reduced, then either $x\in Y$ or $x > y_N$, 
      for some $N \in \N$;

    \item 
      $K\in \N$ and $1\leq K\leq |Y| + 1$;

    \item either  $B = \varnothing$ or $B = \{a_1, \ldots, a_s\}$ for some
          $1\leq s \leq r$; and 

    \item 
      if $K \leq |S|$, then $\phi$ is a function from 
      $$\big(A\times \{y_1, \ldots, y_{L}\}\big)\cup \big(\{y_1, \ldots,
      y_{K-1}\} \times A\big) \cup \big(\{y_K\}\times B\big)$$
      to $Y$ such that $\nu(\phi(u, v)) = \nu(uv)$ for all $u, v\in \dom(\phi)$
      and either $L = K - 1$ or $L < K - 1$ and $L$ is the largest value such
      that $|y_{L}| < |y_{K - 1}|$.
  \end{enumerate}
\end{de}
Note that by part (e) of Definition~\ref{de-data-structure}, $\phi(u,v) \leq
uv$ for all $u,v\in\dom(\phi)$ since $\phi(u, v)\in Y$ is reduced and both $uv$
and $\phi(u, v)$ represent the same element of $S$. If in parts (c) and (e) of
Definition~\ref{de-data-structure}, $L = K - 1$ and $B = A$, respectively, then
the snapshot $(A, Y, K, B, \phi)$ could be written as the snapshot $(A, Y, K +
1, \varnothing, \phi)$ instead. We allow both formulations so that
Algorithm~\ref{algorithm-update} (\textsf{Update}) is more straightforward. 

%%%%%%%%%%%%%%%%%%%%%%%%%%%%%%%%%%%%%%%%%%%%%%%%%%%%%%%%%%%%%%%%%%%%%%%%%%%%%%%%

Roughly speaking, in Definition~\ref{de-data-structure}, part (b) say that $Y$
consists of all the reduced words for $S$ which are less than or equal to
$y_N$. The set $Y$ consists of those elements of $S$ that have been found so
far in our enumeration. The $K$ in part (c) is the index of the next element in
$Y$ that will be multiplied on the right by some of the generators $A$. More
precisely, $y_K$ will be multiplied by the least element in $A\setminus B$
where $B$ is given in part (d).  The function $\phi$ in part (e) represents a
subgraph of the right and left Cayley graphs of $S$ with respect to $A$.  More
specifically, if $B \not = A$, then $K$ is the index of the first element in
$Y$ where not all of the right multiples $y_Ka$ where $a\in A$ are yet known.
Similarly, $L$ is the index of the last element in $Y$ where all of the left
multiples of $y_L$ are known.  Condition (e) indicates that if any right
multiples of a reduced word $w$ for $S$ are known, then the left multiples of
all reduced words of length at most $|w| - 1$ are also known.  The condition
also allows the left multiples of all reduced words of length $|w|$ to be
known.

The output of our version of the Froidure-Pin Algorithm is another 
snapshot of the above type where: the output value of $K$ is at least the
input value; the output value of $Y$ contains the input value as a subset; and
the output function $\phi$ is an extension of the input function.  In this way,
we say that the output snapshot \textit{extends} the input data
structure. In other words, if $K \leq |S|$ or $Y \not =S$, then the output
snapshot contains more edges in the right Cayley graph than the input snapshot,
and may contain more elements of $S$, and more edges in the left Cayley graph.

The parameters $|Y|$ and $K$ quantify the state of the Froidure-Pin Algorithm,
in the sense that the minimum values are $|Y| = |A|$ and $K = 1$, and the
snapshot is \textit{complete} when $K = |S| + 1 = |Y| + 1$, which means that $Y
= S$ and all of the edeges of the left and right Cayley graphs of $S$ are
known.  We say that a snapshot of $S$ is \textit{incomplete} if it is not
complete.  If desirable the Froidure-Pin Algorithm can be halted before the
output snapshot is complete (when $K \leq |S|$), and subsequently continued and
halted, any number of times. Such an approach might be desirable, for example,
when testing if $u\in U$ belongs to $S$, we need only run the Froidure-Pin
Algorithm until $u$ is found, and not until $K = |S| + 1$ unless $u\not \in S$. 

The minimal snapshot required by the Froidure-Pin Algorithm for $S =
\genset{A}$ is:
\begin{equation}\label{eq-minimal}
  (A, A, 1, \varnothing, \varnothing);
\end{equation}
where the only known elements are the generators, and no left nor right
multiples of any elements are known. 

If $(A, Y, K, B, \phi)$ is a snapshot of the semigroup $S$, then $|Y|$ is
referred to as its \textit{size}, its \textit{elements} are the elements of
$Y$, and $x \in S$ \textit{belongs} to the snapshot if the reduced word
corresponding to $x$ belongs to $Y$, i.e.\ $x = \nu(w)$ for some $w \in Y$. 

%%%%%%%%%%%%%%%%%%%%%%%%%%%%%%%%%%%%%%%%%%%%%%%%%%%%%%%%%%%%%%%%%%%%%%%%%%%%%%%%

Algorithm~\ref{algorithm-update} (\textsf{Update}) is an essential step in the
Algorithm~\ref{algorithm-froidure-pin} (\textsf{Froidure-Pin}) and we will
reuse (\textsf{Update}) in Algorithm~\ref{algorithm-closure}
(\textsf{Closure}).  Roughly speaking, \textsf{Update} adds the first unknown
right multiple to an incomplete snapshot. In \textsf{Update} a multiplication
is only performed if it is not possible to deduce the value from existing
products in the input snapshot. Deducing the value of a right multiple is a
constant time operation, whereas performing the multiplication  almost always
has higher complexity. For example, matrix multiplication is at least quadratic
in the number of rows.  This is the part of the Froidure-Pin Algorithm that is
responsible for its relatively good performance.  

%%%%%%%%%%%%%%%%%%%%%%%%%%%%%%%%%%%%%%%%%%%%%%%%%%%%%%%%%%%%%%%%%%%%%%%%%%%%%%%%

\begin{algorithm}
\SetKwComment{Comment}{}{}
\DontPrintSemicolon
  \KwIn{An incomplete snapshot $(A, Y, K, B, \phi)$ for a semigroup $S$ where $B
        \not = A$}
  \KwOut{A snapshot extending $(A, Y, K, B, \phi)$ that
        contains $y_Ka$ where $a = \min(A\setminus B)$}
  \uIf(\Comment*[f]{$s(y_K)a$ is not reduced, and so $y_Ka$ is not reduced}){
  $\phi(s(y_K), a) < s(y_K)a$ } {   
    $\phi(s(y_K), a) = y_i$ for some $y_i\in Y$ \;
    $\phi(y_K, a) := \phi(\phi(f(y_K), p(y_i)), l(y_i))$\; 
  }
  \uElseIf(\Comment*[f]{$s(y_K)a$ is reduced but $y_Ka$ is not}){$\nu(y_Ka)
  = \nu(y_i)$ for some $i
  \leq |Y|$} {
    $\phi(y_K, a) := y_i$\;
  }
  \Else(\Comment*[f]{$y_Ka$ is reduced}){
    \label{final-1}
      $y_{|Y| + 1} := y_Ka$ and $Y \gets Y \cup \{y_{|Y| + 1}\}$ \;
    \label{final-2} 
      $\phi(y_K, a) := y_{|Y| + 1}$ \;
  }
  \Return $(A, Y, K, B \cup \{a\}, \phi)$
  \caption{\textsf{Update}: adds the first unknown right multiple to an
  incomplete snapshot}\label{algorithm-update}
\end{algorithm}

Recall that since $\nu(a) = a$ for all $a\in A$ and $\nu$ is a homomorphism,
the value of $\nu(w)\in S$ can be determined for any $w\in A ^ +$ by computing 
products in $S$. In practice, we perform the single multiplication 
$\nu(y_K)\ \nu(a) = \nu(y_Ka)$.

\begin{lem}\label{lem-fropin-1}
  If  $(A, Y, K, B, \phi)$ is a snapshot of a semigroup $S$, $B\not = A$, and
  $a$ is the least generator in $A\setminus B$, then
  Algorithm~\ref{algorithm-update} (\textsf{Update}) returns a snapshot of $S$
  containing $y_Ka$. 
\end{lem}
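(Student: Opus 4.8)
The plan is to check that the tuple $(A, Y, K, B \cup \{a\}, \phi)$ returned by Algorithm~\ref{algorithm-update} satisfies each clause of Definition~\ref{de-data-structure}, arguing separately for the three branches. Clauses (a) and (c) hold because $A$ and $K$ are unchanged and $|Y|$ never decreases, and since $a = \min(A \setminus B)$ and $B$ is an initial segment of the generators, $B \cup \{a\}$ is again an initial segment, giving clause (d). For clause (e) the domain gains exactly the point $(y_K, a)$, enlarging $\{y_K\} \times B$ to $\{y_K\} \times (B \cup \{a\})$, while $L$ and the left- and right-multiple parts of the domain are untouched. So the bulk of the work is to show that the value assigned to $\phi(y_K, a)$ is a reduced word in $Y$ with $\nu(\phi(y_K, a)) = \nu(y_K a)$, and, in the final branch, that clause (b) survives. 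I will use repeatedly that non-empty subwords of the reduced word $y_K$ are reduced, and the note after Definition~\ref{de-data-structure} that $\phi(u, v) \le uv$, hence $|\phi(u, v)| \le |uv|$. (The degenerate case $y_K \in A$, where $s(y_K)$ is empty, is handled separately, as then only the last two branches apply.)

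Correctness of the assigned value is immediate in the elseif- and else-branches: there $\phi(y_K, a)$ is set to a word $y_i$ with $\nu(y_i) = \nu(y_K a)$, respectively to $y_K a$ itself, which the else-branch enters only when $y_K a$ is reduced. The work is the if-branch, where $s(y_K)a$ is not reduced and $y_i := \phi(s(y_K), a)$ satisfies $y_i < s(y_K)a$. Writing $y_K = f(y_K) s(y_K)$ and $y_i = p(y_i) l(y_i)$ and repeatedly using that $\nu$ is a homomorphism together with the identity $\nu(\phi(u,v)) = \nu(uv)$ of clause (e), one obtains
\[
  \nu(y_K a) = \nu(f(y_K))\nu(s(y_K)a) = \nu(f(y_K))\nu(y_i) = \nu(f(y_K) p(y_i))\nu(l(y_i)) = \nu\big(\phi(\phi(f(y_K), p(y_i)), l(y_i))\big),
\]
which is exactly the value assigned. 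Thus once the $\phi$-evaluations are shown to be legitimate, the value is correct and $\nu(y_K a)$ belongs to the output snapshot.

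It remains to verify that every evaluation of $\phi$ in the if-branch lies in $\dom(\phi)$, which is where clause (e) is genuinely needed. The evaluation $\phi(s(y_K), a)$ is unproblematic: $s(y_K)$ is a proper suffix of $y_K$, hence strictly shorter and of index less than $K$, so $(s(y_K), a) \in \{y_1, \ldots, y_{K-1}\} \times A$. For the remaining two the governing observation is that $y_i \le s(y_K)a$, whence Proposition~\ref{prop-fropin-1}(b) gives $p(y_i) \le s(y_K)$. This settles the outer evaluation: by Proposition~\ref{prop-fropin-1}(a), $f(y_K) p(y_i) \le f(y_K) s(y_K) = y_K$, so $z := \phi(f(y_K), p(y_i)) \le y_K$ has index at most $K$; and if $z = y_K$ then $p(y_i) = s(y_K)$, forcing $l(y_i) < a$ (otherwise $y_i = s(y_K)a$), so $l(y_i) \in B$ since $a = \min(A \setminus B)$, and in either case $(z, l(y_i)) \in \dom(\phi)$. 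The inner evaluation $\phi(f(y_K), p(y_i))$ is the crux: here $f(y_K) \in A$ and I must place $p(y_i)$ among $y_1, \ldots, y_L$, i.e.\ at index at most $L$ rather than merely less than $K$. Since $|p(y_i)| \le |s(y_K)| = |y_K| - 1$, this follows from clause (e) when $L = K-1$, and also when $L < K-1$ provided $|y_{K-1}| = |y_K|$; the delicate configuration is when $y_K$ begins a new length class, so $|y_{K-1}| < |y_K|$, and I expect reconciling clause (e) with the length of $p(y_i)$ in this boundary case to be the main obstacle of the proof.

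Finally clause (b) must be re-established in the else-branch, where $y_K a$ is adjoined as $y_{|Y|+1}$ (it is trivial in the other two branches, where $Y$ is unchanged); concretely, I must show $y_K a$ is the least reduced word exceeding $y_N$, so that no reduced word is skipped. Suppose otherwise and pick the least reduced $z$ with $y_N < z < y_K a$; writing $z = p(z) l(z)$, minimality forces $p(z) = y_j \in Y$. If $j < K$ then $\overline{z} = \phi(y_j, l(z)) \in Y$; if $j = K$ then $z < y_K a$ gives $l(z) < a$, hence $l(z) \in B$ and again $\overline{z} = \phi(y_K, l(z)) \in Y$; and $j > K$ cannot occur, for then $y_j > y_K$ and a length/first-difference comparison via Proposition~\ref{prop-fropin-1} gives $z = y_j\,l(z) > y_K a$. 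Each case contradicts $z \notin Y$, so (b) holds and the returned tuple is a snapshot of $S$ containing $y_K a$.
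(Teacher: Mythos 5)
Your proposal follows the same three-branch structure as the paper's proof, and in one place it is more careful than the paper: you correctly observe that Proposition~\ref{prop-fropin-1}(b) only gives $p(y_i) \le s(y_K)$, so that $z = \phi(f(y_K), p(y_i))$ may equal $y_K$, and you close that sub-case by noting that $l(y_i) < a$ forces $l(y_i) \in B$, so $(z, l(y_i)) \in \{y_K\}\times B \subseteq \dom(\phi)$. (The paper instead writes $\phi(f(y_K),p(y_i)) \le f(y_K)p(y_i) < f(y_K)s(y_K)$, which silently assumes $p(y_i) < s(y_K)$.) Your argument for clause (b) in the final branch is essentially the paper's, phrased as a minimal-counterexample argument.

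However, you have explicitly left open the step you yourself identify as the crux: the definedness of the inner evaluation $\phi(f(y_K), p(y_i))$, which requires $p(y_i)$ to lie in $\{y_1,\ldots,y_L\}$ and not merely in $\{y_1,\ldots,y_{K-1}\}$. Writing that you ``expect'' to reconcile clause (e) with the length of $p(y_i)$ in the boundary case is not a proof of that step, so as it stands the argument is incomplete. You are right that the configuration $L < K-1$ with $|y_{K-1}| < |y_K|$ is not excluded by Definition~\ref{de-data-structure} as literally stated, and in that configuration the bound $|p(y_i)| \le |y_K| - 1$ genuinely fails to force $|p(y_i)| < |y_{K-1}|$; the paper's own proof elides this point, asserting definedness from $p(y_i) \le s(y_K) < y_K$ alone. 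The intended resolution is that every snapshot on which \textsf{Update} is invoked satisfies the stronger invariant that $L = K-1$ whenever $|y_K| > |y_{K-1}|$, because the if-clause of \textsf{FroidurePin} computes the outstanding left multiples before a new length class is opened; under that invariant the two cases you already handle are exhaustive, and you should either add this as a hypothesis or observe it explicitly. A second, smaller omission: in the else-branch you assert that $y_Ka$ is reduced, but the branch condition only says that $s(y_K)a$ is reduced and that $\nu(y_Ka) \ne \nu(y_i)$ for all $i$; reducedness of $y_Ka$ must be derived, e.g.\ by combining your ``no reduced word is skipped'' argument with the observation that $\overline{y_Ka} \le y_Ka$ and $\overline{y_Ka} \notin Y$.
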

\begin{proof}
  There are three cases to consider: 
  \begin{enumerate}[(a)]
    \item
      $s(y_K)a$ is not reduced
    \item 
      $\nu(y_Ka) = \nu(y_i)$ for some $i \leq N$
    \item 
      neither (a) nor (b) holds.
  \end{enumerate}
  If (a) or (b) holds, then the only component of the snapshot that is modified
  is $\phi$, and so we must verify that $\phi$ is well-defined, and satisfies
  Definition~\ref{de-data-structure}(e). In the case when (c) holds, we also 
  need to show that Definition~\ref{de-data-structure}(b) holds.

  \textbf{(a).}
  In this case, $\phi(s(y_K), a)\in Y$ is defined because $s(y_K)\in Y$ and
  $s(y_K) < y_K$. Hence there exists $i \leq N$ such that $\phi(s(y_K), a) =
  y_i$.  Since $p(y_i)l(y_i) = y_i < s(y_K)a$, by
  Proposition~\ref{prop-fropin-1}(b), $p(y_i) \leq s(y_K)$ and $s(y_K) < y_K$
  since $|s(y_K)| < |y_K|$.  Hence $\phi(f(y_K), p(y_i))$ is defined and
  $$\phi(f(y_K), p(y_i)) \leq f(y_K)p(y_i) < f(y_K)s(y_K) = y_K.$$

  By the definition of $\phi$ and since $\nu$ is a homomorphism, 
  \begin{eqnarray*}
    \nu(y_Ka) & = & \nu(f(y_K) s(y_K) a) =\nu(f(y_K)) \nu(s(y_K) a)
    =
  \nu(f(y_K))\nu(\phi(s(y_K), a)) \\
    & = & \nu(f(y_K))\nu(y_i) = \nu(f(y_K) y_i) 
  = \nu(f(y_K) p(y_i)) \nu(l(y_i)) \\
    & = & \nu(\phi(f(y_K), p(y_i)))\nu(l(y_i)) 
  = \nu(\phi(f(y_K), p(y_i))l(y_i)) \\ 
    & = & \nu(\phi(\phi(f(y_K), p(y_i)),l(y_i))).
  \end{eqnarray*}
  Hence if $\phi(y_K, a) := \phi(\phi(f(y_K), p(y_i)), l(y_i))$, then $\phi$
  continues to satisfy Definition~\ref{de-data-structure}(e).

  \textbf{(b).}
  By the assumption of this case, $\nu(\phi(y_K, a)) = \nu(y_i) = \nu(y_Ka)$,
  and Definition~\ref{de-data-structure}(e) continues to hold.

  \textbf{(c).}
  In this case, $y_Ka$ is reduced, and $y_Ka \notin Y$. Hence $y_Ka > \max Y$
  by Definition~\ref{de-data-structure}(b), and so $Y \cup \{y_{|Y| + 1}\}$
  where $y_{|Y|+1} = y_Ka$ satisfies the first part of
  Definition~\ref{de-data-structure}(b). If $w\in A^+$ is reduced and $w < y_Ka
  = y_{|Y|+1}$, then $w = p(w) l(w) < y_Ka$ and so either $p(w) < y_K$  or $l(w) <
  a$ by Proposition~\ref{prop-fropin-1}. In both cases, $(p(w), l(w)) \in
  \dom(\phi)$, and so $w =\phi(p(w), l(w)) \in Y$ and so
  Definition~\ref{de-data-structure}(b) holds. 
  
  By the assumption of this case $\nu(y_Ka) \not= \nu(y_i)$ for any $y_i\in Y$,
  and so, in particular, $\overline{y_Ka}\not \in Y$.  On the other hand,
  $\overline{y_Ka} \leq y_Ka$ and $\overline{y_Ka} < y_Ka$ implies
  $\overline{y_Ka}\in Y$ from the previous paragraph. Hence
  $y_Ka=\overline{y_Ka}$ is reduced.  Finally, $\nu(\phi(y_K, a)) =
  \nu(y_{|Y|+1}) = \nu(y_Ka)$ by definition and so
  Definition~\ref{de-data-structure}(e) holds.
\end{proof}

Next we state the Froidure-Pin Algorithm, a proof that the algorithm is
valid follows from Lemmas~\ref{lem-fropin-1} and~\ref{lem-fropin-2}. 
\begin{algorithm}
\SetKwComment{Comment}{}{}
\DontPrintSemicolon
  \KwIn{A snapshot $(A, Y, K, \varnothing, \phi)$ for a semigroup $S$ and a
  limit $M\in \N$}
  \KwOut{A snapshot of $S$ which extends
      $(A, Y, K, \varnothing, \phi)$ and with size at least $\min\{M, |S|\}$}
  \While{$K \leq |Y|$ and $|Y| < M$ \label{line-1} }{
    $c := |y_K|$ \;
    \While{$K \leq |Y|$ and $|y_K| = c$ and $|Y| < M$\label{inner-while}} {
      $B := \varnothing$ \;
      \While(\Comment*[f]{loop over the generators $A$ in (short-lex) order}){$B\setminus A \not= \varnothing$} {
        $(A, Y, K, B, \phi) \gets$ \textsf{Update}$(A, Y, K, B, \phi)$
        \label{end-for} \;
      }
      $K \gets K + 1$\;
    }
    \If{$K > |Y|$ or $|y_K| > c$ \label{if-clause}} {
       $L = \max\set{i \in \N}{|y_i| < c}$ \label{fropin-define-L}\;
      \For(\Comment*[f]{indices of words of length one less than $y_K$}){$i\in \{L + 1, \ldots, K - 1\}$} { 
        \For(\Comment*[f]{extend $\phi$ so that $A\times
                  \{y_i\}\subseteq \dom(\phi)$}) {$a\in A$} { 
          $\phi(a, y_i) := \phi(\phi(a, p(y_i)), l(y_i))$ \label{left} 
        }
      }
    }
  }
  \Return $(A, Y, K, \varnothing, \phi)$
  \caption{\textsf{FroidurePin}: enumerate at least $\min\{M, |S|\}$ elements
  of a semigroup $S$.} 
  \label{algorithm-froidure-pin}
\end{algorithm}

Note that both the input, and output, of Algorithm~\ref{algorithm-froidure-pin}
(\textsf{FroidurePin}) has fourth component equal to $\varnothing$, and as such
it would appear to be unnecessary. However, it is used in the definition of a
snapshot so that we can succinctly describe the output of \textsf{Closure}.
\textsf{FroidurePin} could be modified to return a snapshot where the fourth
component was not empty, but for the sake of relative simplicity we opted not
to allow this.

\begin{lem}\label{lem-fropin-2}
  If $(A, Y, K, \varnothing, \phi)$ is a snapshot of a semigroup $S$ and
  $M\in \N$, then Algorithm~\ref{algorithm-froidure-pin} (\textsf{FroidurePin})
  returns a snapshot for $S$ with at least $\min\{M, |S|\}$ elements.
\end{lem}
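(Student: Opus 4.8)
The plan is to prove the lemma by establishing a loop invariant for the outer \textbf{while} loop of Algorithm~\ref{algorithm-froidure-pin}, namely that at the top of each iteration the tuple $(A, Y, K, \varnothing, \phi)$ is a valid snapshot of $S$ in the sense of Definition~\ref{de-data-structure} that extends the input snapshot. Granting this invariant together with termination, the size postcondition will follow by inspecting the loop's exit condition. The invariant holds on entry because the input is, by hypothesis, such a snapshot and extends itself. For the maintenance step I would fix one iteration of the outer loop, set $c = |y_K|$ as in the algorithm, and analyse the two substantive pieces of work performed: the inner \textbf{while} loop that computes right multiples, and the \textbf{for} loop (line~\ref{left}) that computes left multiples.

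For the right multiples, the innermost loop calls \textsf{Update} repeatedly until $B = A$, so by Lemma~\ref{lem-fropin-1} each call returns a valid snapshot and after the loop $(A, Y, K, A, \phi)$ is a valid snapshot in which all of $y_K a$, $a \in A$, are known. By the remark following Definition~\ref{de-data-structure}, this is the same snapshot as $(A, Y, K + 1, \varnothing, \phi)$, which justifies the assignment $K \gets K + 1$ and shows that validity persists as the inner loop sweeps through every word of length $c$. Throughout this sweep the value $L$ of Definition~\ref{de-data-structure}(e) stays equal to the largest index with $|y_L| < c$, i.e.\ the left multiples are known exactly up to length $c - 1$; I would check that the two alternatives allowed for $L$ in part (e) are respected at each stage, the case $L = K - 1$ occurring only at the very first word of the batch.

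For the left multiples, the key point is that when the \textbf{if} clause fires we have either $K > |Y|$ or $|y_K| > c$, so every reduced word of length at most $c$ already has index at most $K - 1$. For each $i \in \{L + 1, \dots, K - 1\}$ (the indices of the length-$c$ words) and each $a \in A$, I would verify that $\phi(\phi(a, p(y_i)), l(y_i))$ is defined: the inner application is legitimate because $p(y_i)$ is reduced of length $c - 1$ (since every non-empty subword of a reduced word is reduced) and hence lies in $\{y_1, \dots, y_L\}$, while the outer application is legitimate because $\phi(a, p(y_i)) \leq a\,p(y_i)$ has length at most $c$ and therefore index at most $K - 1$, whence its right multiple by $l(y_i) \in A$ is known. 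The identity $\nu(\phi(a, y_i)) = \nu(a y_i)$ is then a short computation using $y_i = p(y_i) l(y_i)$ and the homomorphism property of $\nu$, exactly parallel to case (a) of the proof of Lemma~\ref{lem-fropin-1}; note that these assignments depend only on data present before the loop begins, so no ordering issue arises. After this loop the left multiples of all length-$c$ words are known, advancing $L$ to $K - 1$ and restoring the invariant. I would also treat separately the branch in which the inner loop exits mid-batch because $|Y| \geq M$: there the \textbf{if} clause does not fire, $K$ still points inside the length-$c$ block, and the returned tuple is a valid snapshot with $L$ the last index of length $c - 1$.

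Termination is immediate since $S$ is finite: $Y$ can never exceed $|S|$ elements and the inner loop strictly increases $K$, which is bounded by $|Y| + 1$. For the size claim, when the outer loop halts either $|Y| \geq M$, giving $|Y| \geq \min\{M, |S|\}$ at once, or else $K > |Y|$. In the latter case every $y_i \in Y$ has all of its right multiples $\phi(y_i, a)$ defined, so $\nu(Y)$ is closed under right multiplication by $A$ and contains $A$; an easy induction on word length then gives $\nu(Y) = \genset{A} = S$, and since the elements of $Y$ are distinct reduced words we conclude $|Y| = |S| \geq \min\{M, |S|\}$. The step I expect to be most delicate is the precise bookkeeping of the index $L$ and of Definition~\ref{de-data-structure}(e) across the various ways the inner loop can exit, in particular reconciling the mid-batch exit (where the left-multiple loop is skipped) with the end-of-batch exit (where it runs), rather than any single computation.
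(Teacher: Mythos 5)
Your proof is correct and follows essentially the same route as the paper's: invoking Lemma~\ref{lem-fropin-1} for the \textsf{Update} calls, verifying that $\phi(\phi(a,p(y_i)),l(y_i))$ is defined because $p(y_i)$ has length $c-1$ and $\phi(a,p(y_i))$ has length at most $c$ and hence index at most $K-1$, checking the homomorphism identity, and treating the mid-batch exit where $|Y|\geq M$ separately. The only substantive difference is that you explicitly justify why $K>|Y|$ forces $|Y|=|S|$ (closure of $\nu(Y)$ under right multiplication by $A$), a point the paper's proof of this lemma merely asserts and only argues in the subsequent corollary.
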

\begin{proof}
  We may suppose that $K\leq |Y|$ and $|Y| < M$, since otherwise
  \textsf{FroidurePin} does nothing. 

  By Lemma~\ref{lem-fropin-1}, after applying 
  \textsf{Update} to the input snapshot and every $a\in A$, in
  line~\ref{end-for}, $(A, Y, K, A, \phi)$ is a snapshot of $S$ containing
  $y_KA$.  At this point, if $K \leq |Y|$ and $|y_{K + 1}| = |y_K|$, then we
  continue the while-loop starting in line~\ref{inner-while}.  When we arrive
  in line~\ref{if-clause} one of the following holds: $K > |Y|$, $|Y|\geq M$,
  or $|y_K| > c = |y_{K - 1}|$.
  
  If the condition in line~\ref{if-clause} is not satisfied, then the condition
  in line~\ref{inner-while} is false because $|Y| \geq M$. Hence the condition
  in line~\ref{line-1} is also false, and so $(A, Y, K, A, \phi)$ is returned
  in this case. Since Algorithmn~\ref{algorithm-update} (\textsf{Update})
  returns a snapshot, $(A, Y, K, A, \phi)$ could only fail to be a snapshot
  because the value of $K$ was increased. In other words, the tuple $(A, Y, K,
  A, \phi)$ satisfies Definition~\ref{de-data-structure} (a) to (d) and
  $\nu(\phi(u,v)) = \nu(uv)$ for all $(u, v) \in \dom(\phi)$.
  Since the condition in line~\ref{if-clause} is not satisfied,
  $|y_K| = |y_{K-1}| = c$, and so $(A, Y, K, A, \phi)$ continues to satisfy
  Definition~\ref{de-data-structure}(e), which is hence a snapshot of $S$,
  as required.
 
  If the condition in line~\ref{if-clause} is satisfied, then 
  either $K > |Y|$; or $K \leq |Y|$ and $|y_{K}| > |y_{K-1}| = c$.
  In either case, the tuple $(A, Y, K, B, \phi)$ satisfies
  Definition~\ref{de-data-structure}(a) to (d), but may fail to satisfy part (e),
  since $\phi$ may not be defined on $A \times \{y_{L + 1}, \ldots, y_{K-1}\}$,
  where $L\in \N$ is the maximum value such that $|y_L| < |y_{K - 1}|$ (as
  defined in line~\ref{fropin-define-L}).  The
  only component of $(A, Y, K, B, \phi)$ that is modified within the if-clause
  is $\phi$.  Hence by the end of the if-clause  $(A, Y, K, B, \phi)$ is a
  snapshot for $S$, provided that $\phi(a, y_i)$ is well-defined for all $i\in
  \{L + 1, \ldots, K - 1\}$ and $\nu(\phi(a, y_i)) = \nu(ay_i)$ for all $a\in A$.

  Let $i \in \{L + 1, \ldots, K - 1\}$.
  Since $|p(y_i)| = |y_i| - 1$ and $|y_i| = |y_{K - 1}|$, $\phi(a, p(y_i))$ is
  defined and $|\phi(a, p(y_i))| \leq |ap(y_i)| = |y_{K-1}|$, because $\phi(a,
  p(y_i)) \leq ap(y_i)$.
  Since $y_{K-1}$ is the largest reduced word of length
  $|y_{K-1}|$, there exists $j < K$ such that $\phi(a, p(y_i)) = y_j$. By
  definition, $(y_j, x)\in \dom(\phi)$ for all $x\in A$.
  In particular, $\phi(y_j, l(y_i)) = \phi(\phi(a, p(y_i)), l(y_i))$ is
  defined, and so the assignment in line~\ref{left} is valid.
  
  Let $a\in A$ and $i\in \{L + 1, \ldots, K - 1\}$ be arbitrary. Then 
  \begin{eqnarray*}
    \nu(\phi(a, y_i)) & = & \nu(\phi(\phi(a, p(y_i)), l(y_i))) = 
     \nu(\phi(a, p(y_i))l(y_i)) = \nu(\phi(a, p(y_i)))\ \nu(l(y_i)) \\
     & = & \nu(ap(y_i))\ \nu(l(y_i)) =  \nu(ap(y_i)l(y_i)) \\
     & =  & \nu(ay_i),
  \end{eqnarray*}
  as required.
  
  Finally, the algorithm halts if $|K| > |Y|$ in which case the snapshot
  contains $|S|$ elements, or if $|Y| \geq M$. In either case, $|Y| \geq
  \min\{M, |S|\}$, as required.
\end{proof}

\begin{cor}
  If $(A, Y, K, \varnothing, \phi)$ is a snapshot of a semigroup $S$ and
  $M\in \N$ is such that $M\geq |S|$, then
  Algorithm~\ref{algorithm-froidure-pin} (\textsf{FroidurePin})
  returns $(A, R, |S| + 1, \varnothing,
  \phi)$ where $R$ is the set of all reduced words for elements of $S$
  and $\dom(\phi) = (A \times R) \cup (R \times A)$.
\end{cor}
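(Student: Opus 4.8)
The plan is to deduce the corollary from Lemma~\ref{lem-fropin-2} by analysing how Algorithm~\ref{algorithm-froidure-pin} (\textsf{FroidurePin}) terminates once it has exhausted $S$. First I would fix the size of the output. Since $M \geq |S|$, Lemma~\ref{lem-fropin-2} guarantees that the returned snapshot has at least $\min\{M, |S|\} = |S|$ elements. Conversely, the elements of $Y$ are pairwise distinct reduced words, and since each element of $S$ has exactly one reduced word we always have $|Y| \leq |S|$. Hence $|Y| = |S|$, and by Definition~\ref{de-data-structure}(b) the set $Y$ consists of every reduced word for $S$; that is, $Y = R$.

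Next I would determine the returned value of $K$. The outer while-loop in line~\ref{line-1} is exited only when $K > |Y|$ or $|Y| \geq M$. Because $K$ is increased solely in unit steps inside the inner while-loop of line~\ref{inner-while}, and is tested against $|Y|$ immediately before each increment, the first moment $K$ exceeds $|Y|$ it equals $|Y| + 1$; as $|Y| = |S|$ at that stage, this would give the desired value $K = |S| + 1$. The crux of this step is therefore to show that the loop terminates through the failure of $K \leq |Y|$ and not through $|Y| \geq M$, since a halt of the second kind would freeze $K$ at a value at most $|S|$ with some elements left unprocessed. This is exactly where the interplay between $M$ and $|S|$ must be handled carefully: the guard $|Y| < M$ must not trigger a premature halt, so that $K$ genuinely runs through every index $1, \ldots, |S|$. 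I expect this to be the main obstacle, and I would isolate it as a short invariant argument about the inner loop.

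Finally, granting $K = |S| + 1$, I would verify that $\dom(\phi) = (A \times R) \cup (R \times A)$, treating right and left multiples separately. For the right multiples, each index $j \leq |S|$ is at some point the current value of $K$, and the loop of line~\ref{end-for} then applies \textsf{Update} for every generator $a \in A$; by Lemma~\ref{lem-fropin-1} this puts $(y_j, a)$ into $\dom(\phi)$, so $R \times A \subseteq \dom(\phi)$. For the left multiples, each time the current length $c$ is used up the if-clause of line~\ref{if-clause} is entered and line~\ref{left} defines $\phi(a, y_i)$ for every $a \in A$ and every $y_i$ in the just-completed length class; the one case needing care is the final class, containing $y_{|S|}$, whose left multiples are supplied only if the last exit from the inner loop satisfies the condition $K > |Y|$ of line~\ref{if-clause}. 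Confirming that this final if-clause does fire (a consequence of $K = |S| + 1 > |Y|$) completes the argument that $A \times R \subseteq \dom(\phi)$. Since neither \textsf{Update} nor line~\ref{left} introduces any pair outside $(A \times R) \cup (R \times A)$, equality holds, and the returned snapshot is $(A, R, |S| + 1, \varnothing, \phi)$, as required.
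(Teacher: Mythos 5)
Your argument reaches the same conclusion as the paper but by a partly different route, and the difference is worth recording. For $Y = R$ the paper argues by minimal counterexample: taking for granted that the returned value of $K$ is $|S|+1$, it has $\dom(\phi) \supseteq Y \times A$, and then a minimal reduced word $w \notin Y$ would satisfy $p(w) \in Y$, whence $w = \phi(p(w), l(w)) \in Y$, a contradiction. You instead count: Lemma~\ref{lem-fropin-2} gives $|Y| \geq |S|$, uniqueness of reduced words gives $|Y| \leq |S|$, and since $Y \subseteq R$ and $|R| = |S|$ this forces $Y = R$. Your derivation of $Y = R$ is independent of the value of $K$ and of $\dom(\phi)$, which makes the logical order cleaner than the paper's, whose treatment of $Y=R$ quietly depends on the claim about $K$ that it never establishes. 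Your handling of $\dom(\phi)$ agrees in substance with the paper's but is more explicit about why every length class, including the last, has its left multiples filled in by line~\ref{left}.

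The gap is the step you explicitly defer: showing that the loops exit because $K > |Y|$ rather than because $|Y| \geq M$. You call this ``the main obstacle'' and promise a short invariant argument, but you never supply it, and you should be aware that no such argument exists at the boundary. If $M > |S|$ the matter is immediate: $|Y| \leq |S| < M$ holds throughout, so the guard $|Y| < M$ never fails and the loops in lines~\ref{line-1} and~\ref{inner-while} can only terminate via $K > |Y|$ or a change of word length; you should simply say this. If $M = |S|$, however, the guard fails at the first test after the last element of $S$ is appended to $Y$, which in general happens while $K$ is still strictly below $|Y|$; the algorithm then returns with $K \leq |S|$, the if-clause in line~\ref{if-clause} need not fire, and the left multiples of the longest words are never defined. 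So the statement is only safe for $M > |S|$, and the paper's own proof sidesteps the point by simply supposing that the returned third component is $|S|+1$. Either restrict to $M > |S|$ and give the one-line argument above, or treat the case $M = |S|$ separately (e.g.\ by a second pass of the algorithm); as written, the deferred step cannot be completed in the generality you assume.
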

\begin{proof}
   Suppose that the snapshot returned by \textsf{FroidurePin} is $(A, Y, |S| +
   1, \varnothing, \phi)$. 

   Under the assumptions of the statement, the last iteration of while-loop
   starting in line~\ref{inner-while} terminates when $K = |Y| + 1$. Hence the
   condition of the if-clause in line~\ref{if-clause} is satisfied, and so
   $\dom(\phi) =  (A \times Y) \cup (Y \times A)$.

   Assume that there exists a reduced word $w\in A^+$ such that $w\not \in Y$.
   Then we may assume without loss of generality that $w$ is the minimum such
   reduced word. Hence $p(w)$ is a reduced word and $p(w) < w$ and so $p(w) \in
   Y$. But then $\phi(p(w), l(w))$ is defined, and so $w = \phi(p(w), l(w)) \in
   Y$, a contradiction. Hence $Y = R$.
\end{proof}

%%%%%%%%%%%%%%%%%%%%%%%%%%%%%%%%%%%%%%%%%%%%%%%%%%%%%%%%%%%%%%%%%%%%%%%%%%%%%%%%

\section{The closure of a semigroup and some elements}\label{section-closure}

In this section we give the first of the two new algorithms in this paper.
Given a snapshot of a semigroup $S = \genset{A} \leq U$ and some
additional generators $X\subseteq U$, this algorithm returns a snapshot
for $T = \genset{A, X}$. 
If $\nu: (A \cup X) ^ + \to T$ is the unique homomorphism extending the
inclusion map from $A \cup X$ into $T$, then $\nu$ restricted to $A^+$ is the
unique homomorphism extending the inclusion map from $A$ into $S$. Hence we
only require the notation $\nu: (A \cup X) ^ + \to T$.

The purpose of Algorithm~\ref{algorithm-closure} (\textsf{Closure}) is to avoid
multiplying elements in the existing snapshot for $S$, by the generators $A$,
in the creation of the snapshot for $T$ wherever such products are already
known. The principal complication is that the introduction of new generators
can change the reduced word representing a given element $s$ of $S$. The new
generating set $A\cup X$ may allow $s$ to be written as a shorter word than was
previously possible with $A$.  Suppose that $\{y_1, \ldots, y_{K_S}\}$ is the
set of of those elements in the original snapshot for $S$ whose right multiples
by the old generating set were known.  \textsf{Closure} terminates when the
right multiplies by all the generators, old and new, of every element in
$\{y_1, \ldots, y_{K_S}\}$ are known. In short \textsf{Closure} is a version of
\textsf{FroidurePin} where each of $K_S|A|$ products can be found in constant
time.  This works particularly well when the addition of new generators does
not increase the size of the semigroups substantially, or when the complexity
of multiplication is very high.

We prove that \textsf{Closure} is valid in
Lemma~\ref{lem-algorithm-closure-is-valid}.

\begin{algorithm}
\SetKwComment{Comment}{}{}
\DontPrintSemicolon
  \KwIn{A snapshot $(A, Y, K_S, \varnothing, \phi_S)$ for a semigroup $S\leq
  U$, and a collection of elements $X\subseteq U\setminus Y$}
  \KwOut{A snapshot $(A \cup X, Z, K_T, \varnothing,
     \phi_T)$ for $T = \genset{S, X}$ that contains $\{y_1, \ldots, y_{K_S}\}X$.}

      Define $A = \{z_1 = y_1, \ldots, z_m = y_m\}$ and $X = \{z_{m + 1}, \ldots, z_{m
      + n}\}$, $z_1 < z_2 < \cdots < z_{m + n}$ \;
      $Z := A\cup X$, $K_T := 1$, $\phi_T = \varnothing$
      \Comment*[r]{initialise the snapshot for $T$}
      Define $\lambda: A \to Z$ to be the identity function on $A$ 
      \Comment*[f]{$\lambda$ maps a reduced word for $S$ to a reduced word for $T$
      representing the same element, at this point $A$ is the set $\set{y\in Y}{\exists z\in Z,\
      \nu(y) = \nu(z)}$}\;
      \While(\Comment*[f]{there are reduced words in $Y$ not yet in the
      snapshot for $T$}){$\dom(\lambda) \not= Y$} {
        $c := |z_{K_T}|$\;
        \While{$\dom(\lambda) \not= Y$ and $|z_{K_T}| = c$
        \label{closure-inner-while} }{
        $ B := \varnothing$ \;
      \If{$\exists y_i\in Y$, $\nu(z_{K_T}) = \nu(y_i)$ and $i < K_S$
      \label{closure-if-1}} {
        \For(\Comment*[f]{loop over the old generators in (short-lex) order}){$a\in A$} {
          \uIf{$\phi_S(y_i, a)\in \dom(\lambda)$} {
            $\phi_T(z_{K_T}, a) := \lambda(\phi_S(y_i, a))$
            \label{mod-phi_T-1} \;
          } \Else {
             $\phi_T(z_{K_T}, a) := z_{K_T}a$ and $\lambda(\phi_S(y_i, a)) :=
             z_{K_T}a$ \label{lambda-mod-1}\;
             $z_{|Z| + 1} := z_Ka$ and $Z\gets Z\cup \{z_{K_T}a\}$ \;
          }
        }
        $B \gets A$\;
      }
      \While{$(A\cup X)\setminus B \not=\varnothing$}  {
          $a := \min\{(A\cup X)\setminus B\}$
          \label{a-is-defined}\;
         $(A\cup X, Z, K_T, B, \phi_T) \gets$ 
         \textsf{Update}$(A\cup X, Z, K_T, B, \phi_T)$\;

         \If{$\phi_T(z_{K_T}, a) = z_{K_T}a$ and $\nu(z_{K_T}a) = \nu(y_i)$ for some
         $y_i\in Y$}{
            $\lambda(y_i) := z_{K_T}a$ \;\label{lambda-mod-2}\;
         }
      }
      $K_T \gets K_T + 1$\;
    }
    \If{$K_T > |Z|$ or $|z_{K_T}| > c$} {
       $L = \max\set{i \in \N}{|z_i| < c}$ \;
      \For {$i\in \{L + 1, \ldots, K_T - 1\}$} {
        \For{$a\in A \cup X$} {
          $\phi_T(a, z_i) := \phi_T(\phi_T(a, p(z_i)), l(z_i))$ \;
        }
      }
    }
  }
  \caption{\textsf{Closure}: add additional generators to a snapshot for a
  semigroup} 
  \label{algorithm-closure}
\end{algorithm}

\begin{lem}\label{lem-algorithm-closure-is-valid}
  If $(A, Y, K_S, \varnothing, \phi_S)$ is a snapshot of a semigroup $S\leq U$
  and $X\subseteq U$, then Algorithm~\ref{algorithm-closure}
  (\textsf{Closure}) returns a snapshot of $T = \genset{X, A}$ that contains
  $\{y_1, \ldots, y_{K_S}\}X$, and hence also contains $Y$.
\end{lem}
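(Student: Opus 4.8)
The plan is to treat \textsf{Closure} as the Froidure--Pin enumeration of $T=\genset{A\cup X}$ on the alphabet $A\cup X$, ordered so that every old generator precedes every new one, with two features layered on top: the if-clause beginning at line~\ref{closure-if-1}, which recycles the known products $\phi_S(y_i,a)$ instead of recomputing them, and the bookkeeping map $\lambda$, which records for each element of $Y$ already discovered inside $T$ the reduced word for $T$ representing it, with halting condition $\dom(\lambda)=Y$. Accordingly I would first reprove, in this setting, the validity statements of Lemmas~\ref{lem-fropin-1} and~\ref{lem-fropin-2}: the calls to \textsf{Update} are covered verbatim by Lemma~\ref{lem-fropin-1}, and the block computing the left multiples $\phi_T(a,z_i)$ is justified exactly as in Lemma~\ref{lem-fropin-2}. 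Before anything else I would establish the structural identity driving the whole argument, $Y=A\cup\set{\phi_S(y_i,a)}{1\le i<K_S,\ a\in A}$ (call it $(\ast)$); it follows from Definition~\ref{de-data-structure}(b),(e), since every non-generator $w\in Y$ has $p(w)\in Y$ reduced with $p(w)<w$, and $w$ can only have entered $Y$ as $\phi_S(p(w),l(w))$ while some $y_i$ with $i<K_S$ was being processed.

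The core is a pair of invariants for the main loop. First, $(A\cup X,Z,K_T,B,\phi_T)$ is at each stage a snapshot of $T$ in the sense of Definition~\ref{de-data-structure}, with the single permitted defect that the left multiples of the current length class may be temporarily missing (repaired by the final if-clause, as in Lemma~\ref{lem-fropin-2}). Second, $\lambda$ is a partial map $Y\to Z$ with $\nu(\lambda(y))=\nu(y)$ for all $y\in\dom(\lambda)$; since $Z$ consists of reduced words for $T$ and reduced words are unique, this forces $\lambda(y)$ to be the reduced word for $T$ representing $\nu(y)$, and hence $\dom(\lambda)$ to be exactly the set of $y\in Y$ whose value $\nu(y)$ has already been enumerated in $Z$. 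I would check that both lines extending $\lambda$ (lines~\ref{lambda-mod-1} and~\ref{lambda-mod-2}) preserve this, the only point being that $\phi_S(y_i,a)$ really is the reduced word for $S$ of $\nu(y_ia)=\nu(z_{K_T}a)$, so line~\ref{mod-phi_T-1} assigns the correct value.

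The one genuinely new verification is the if-clause. When $\nu(z_{K_T})=\nu(y_i)$ with $i<K_S$, correctness in the branch of line~\ref{mod-phi_T-1} is immediate from the $\lambda$-invariant; in the else-branch I must show that $z_{K_T}a$ is reduced for $T$. Here the alphabet ordering is essential: because old generators are processed (in the if-clause) before new ones (in the ensuing \textsf{Update} loop), both in short-lex order, when $z_{K_T}a$ is reached every reduced word for $T$ strictly below it already lies in $Z$; since $\phi_S(y_i,a)\notin\dom(\lambda)$ says precisely that $\nu(z_{K_T}a)$ has not yet been enumerated, $z_{K_T}a$ can have no shorter or lex-smaller representative and is therefore reduced. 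This is the short-lex argument of case~(c) of Lemma~\ref{lem-fropin-1}, transported to the mixed alphabet.

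For termination and the output, since $T$ is finite and the loop is a Froidure--Pin enumeration, every $\nu(y)$ with $y\in Y$ is eventually enumerated, so $\dom(\lambda)$ increases to $Y$ and the loop halts; this already gives that the snapshot contains $Y$. To read off $\{y_1,\dots,y_{K_S}\}X$, observe that processing the word $\lambda(y_i)$ for $i\le K_S$ fires the if-clause and then the \textsf{Update} loop over $X$, recording $\nu(y_ig)$ for every $g\in A\cup X$; combined with $(\ast)$ the old-generator products recover all of $Y$, and the new-generator products yield $\{y_1,\dots,y_{K_S}\}X$. The main obstacle is exactly this last step: one must prove that when the loop halts at $\dom(\lambda)=Y$, the words $\lambda(y_i)$ for $i\le K_S$ have all actually been \emph{processed}, not merely discovered, since an element of $Y$ can be reached as a right multiple along a path through the new generators before its own short-lex predecessor in $\{y_1,\dots,y_{K_S}\}$ has been multiplied out. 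Closing this gap is the delicate part of the argument, and it is here that the precise interaction between $(\ast)$, the ordering of the alphabet, and the two rules updating $\lambda$ must be pinned down.
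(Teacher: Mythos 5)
Your overall strategy coincides with the paper's: initialise the minimal snapshot for $T$, maintain the two invariants on $\lambda$ (that $\dom(\lambda)$ is exactly the set of $y\in Y$ already represented in $Z$, and that $\nu(\lambda(y))=\nu(y)$ on $\dom(\lambda)$), defer the \textsf{Update} calls and the final left-multiple block to Lemmas~\ref{lem-fropin-1} and~\ref{lem-fropin-2}, and prove reducedness of $z_{K_T}a$ in the else-branch of the if-clause at line~\ref{closure-if-1} by the short-lex contradiction: any reduced $u<z_{K_T}a$ with $\nu(u)=\nu(y_ia)$ would already lie in $Z$, forcing $\phi_S(y_i,a)\in\dom(\lambda)$. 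That part is correct and is essentially the paper's argument.

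There are two genuine problems. First, your identity $(\ast)$, namely $Y=A\cup\set{\phi_S(y_i,a)}{i<K_S,\ a\in A}$, does not follow from Definition~\ref{de-data-structure} as you claim: a tuple with $K_S=1$, $\phi_S=\varnothing$, and $Y$ any downward-closed set of reduced words is a legitimate snapshot, and $(\ast)$ fails for it. Your justification (``$w$ can only have entered $Y$ as $\phi_S(p(w),l(w))$ while some $y_i$ with $i<K_S$ was being processed'') tacitly assumes the snapshot was produced by running \textsf{FroidurePin}, which the definition does not require. Fortunately $(\ast)$ is not needed: that the output contains $Y$ is immediate from the halting condition $\dom(\lambda)=Y$ together with your invariant (a), with no detour through the old products. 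Second, and more seriously, you explicitly leave unproved the clause that the output contains $\{y_1,\dots,y_{K_S}\}X$ --- the concern that the while-loop may terminate (because $\dom(\lambda)=Y$) before every word $\lambda(y_i)$ with $i\le K_S$ has actually been multiplied by the new generators. You are right that this step needs an argument (and, for what it is worth, the paper's own proof dispatches the endgame with a one-line appeal to Lemma~\ref{lem-fropin-2} and does not supply one either), but flagging the gap is not the same as closing it: as written, the proposal establishes that the output is a snapshot of $T$ containing $Y$, and does not establish the $\{y_1,\dots,y_{K_S}\}X$ containment that is part of the statement.
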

\begin{proof}
  At the start of \textsf{Closure}, $(A \cup X, Z,
  K_T, B, \phi_T)$ is initialised as $(A\cup X, A\cup X, 1,
  \varnothing, \varnothing)$, which is the minimal snapshot of $T$ by
  \eqref{eq-minimal}.

  Additionally, $\lambda: A \to Z$ satisfies the following two conditions:
  \begin{enumerate}[\rm (a)]
    \item 
      $\dom(\lambda) = \set{y\in Y}{\exists z\in Z,\ \nu(y) = \nu(z)}$; and

    \item 
      $\nu(\lambda(y)) = \nu(y)$ for all $y\in \dom(\lambda)$.
  \end{enumerate}
  
  If $Y = \dom(\lambda)$, then the minimal snapshot of $T$ is returned,
  and there nothing to prove. So, suppose that $Y \not = \dom(\lambda)$.  We
  proved in Lemma~\ref{lem-fropin-1}, that Algorithm~\ref{algorithm-update}
  (\textsf{Update})
  returns a snapshot of $T$, given a snapshot of $T$. The data
  structure $(A \cup X, Z, K_T, B, \phi_T)$ is otherwise only modified within
  the while-loop starting on line~\ref{closure-inner-while}, in the case that
  there exists $y_i\in Y$ such that $\nu(z_{K_T}) = \nu(y_i)$ and $i < K_S$.
  Hence it suffices to verify that after performing the steps in the if-clause
  starting in line~\ref{closure-if-1} the tuple $(A \cup X, Z, K_T, B, \phi_T)$
  is still a snapshot of $T$. In order to do this, we use the properties
  of $\lambda$ given above. Hence we must also check that $\lambda$ continues
  to satisfy conditions (a) and (b) whenever it is modified (i.e.\ in
  lines~\ref{lambda-mod-1} and~\ref{lambda-mod-2}).

  Suppose that $\lambda$ satisfies conditions (a) and (b) above, and that
  there exists $y_i\in Y$ such that $\nu(z_{K_T}) = \nu(y_i)$ and $i < K_S$ and
  that $a\in A$. Since $i < K_S$, $\phi_S(y_i, a)$ is defined for all $a\in A$.
  
  If $\phi_S(y_i, a) \in \dom(\lambda)$, then in line~\ref{mod-phi_T-1} we define 
  $\phi_T(z_{K_T}, a) = \lambda(\phi_S(y_i, a))$. In this case, 
   $(A \cup X, Z, K_T, B, \phi_T)$ satisfies
   Definition~\ref{de-data-structure}(a) to (d) trivially and 
  $$\nu(\phi_T(z_{K_T}, a)) = \nu(\lambda(\phi_S(y_i, a))) =  \nu(\phi_S(y_i,
  a)) = \nu(y_ia) = \nu(z_{K_T}a).$$
  and so Definition~\ref{de-data-structure}(e) holds.
 
  If $\phi_S(y_i, a)\not\in \dom(\lambda)$, then we define $\phi_T(z_{K_T}, a)
  = z_{K_T}a$. Since $\nu(\phi_T(z_{K_T}, a)) = \nu(z_{K_T}a)$ by definition,
  it suffices to show that $z_{K_T}a$ is a reduced word for
  $T$. Suppose, seeking a contradiction, that $u\in (A\cup X) ^+$ is reduced,
  $\nu(u) = \nu(y_ia)$, and $u < z_{K_T}a$. Then either $p(u) < z_{K_T}$ or 
  $p(u) = z_{K_T}$ and $l(u) < a$ by Propostion~\ref{prop-fropin-1}(b). In
  either case, $\phi_T(p(u), l(u)) = u$ is defined and so $u\in Z$. Thus
  $\nu(\phi_S(y_i, a)) = \nu(y_ia) = \nu(u)$ and so $\phi_S(y_i, a) \in
  \dom(\lambda)$ by (a), which contradicts the assumption of this case. Hence
  $z_{K_T}a$ is reduced. We must verify conditions (a) and (b) on $\lambda$
  after defining $\lambda(\phi_S(y_i, a)) = z_{K_T}a \in Z$ in
  line~\ref{lambda-mod-1}. Condition (a) holds, since we extended both $Z$ and
  $\dom(\lambda)$ by a single value. Since $$\nu(\lambda(\phi_S(y_i, a))) =
  \nu(z_{K_T}a) = \nu(z_{K_T})\nu(a) = \nu(y_i)\nu(a) = \nu(y_ia) =
  \nu(\phi_S(y_i, a))$$ condition (b) also holds.

  The only other part of the algorithm where $\lambda$ is modified is
  line~\ref{lambda-mod-2}. Suppose that $a = \min((A\cup
  X)\setminus B)$ as
  defined in line~\ref{a-is-defined}. 
  If $\phi_T(z_{K_T}, a) = z_Ka$ and $\nu(z_{K_T})= \nu(y_i)$ for some $y_i\in Y$,
  then $z_Ka\in Z$ is reduced. 
  In this case, we define $\lambda(y_i) = z_{K_T}a$. Conditions (a) and (b)
  hold by the above argument.

  We have shown that within the while-loop starting on
  line~\ref{closure-inner-while}, the tuple $(A \cup X, Z, K_T, B, \phi_T)$
  satisfies Definition~\ref{de-data-structure}(a) to (d).  Additionally, we
  have shown that conditions (a) and (b) hold for $\lambda$. 
 
  The remainder of the proof follows by the argument given in the proof of
  Lemma~\ref{lem-fropin-2}.
\end{proof}

%%%%%%%%%%%%%%%%%%%%%%%%%%%%%%%%%%%%%%%%%%%%%%%%%%%%%%%%%%%%%%%%%%%%%%%%%%%%%%%%

\subsection{Experimental results}

In this section we compare the performance of
Algorithm~\ref{algorithm-froidure-pin} (\textsf{FroidurePin}) and
Algorithm~\ref{algorithm-closure} (\textsf{Closure}). Further details about the
computations performed, including code that can be used to reproduce the data,
in this section can be found in~\cite{Mitchell2017ac}.

We note that in the implementation of
\textsf{Closure} in \libsemigroups~\cite{Mitchell2016ab}, the snapshot of $S=
\genset{A}$ is modified in-place to produce the data structure for $T=
\genset{A, X}$, and that none of the elements of $S$ need to be copied or moved
in memory during \textsf{Closure}.

Figure~\ref{fig-bmats-1},~\ref{fig-trans-1},~\ref{fig-bmats-2},
and~\ref{fig-trans-2} we plot a comparison of \textsf{FroidurePin} and
\textsf{Closure} for 300 examples of randomly generated semigroups.  For a
particular choice of $A$ and $X$, three computations were run:
\begin{enumerate}[(1)]
  \item \label{closure-benchmark-step-1}
    a snapshot for $\genset{A}$ was enumerated using
    \textsf{FroidurePin} until it
    contained $|\genset{A}|$ elements; we denote the time taken for this step
    by $t_1$.

  \item \label{closure-benchmark-step-2}
    \textsf{Closure} was performed on the snapshot obtained from
    \eqref{closure-benchmark-step-1} and the generators $X$. We denote the size
    of the snapshot obtained in this way by $M$ and the time taken for this
    step by $t_2$.

  \item \label{closure-benchmark-step-3}
    A snapshot for $\genset{A, X}$ was enumerated using \textsf{FroidurePin}
    until it contained $M$ (from \eqref{closure-benchmark-step-2}) elements.
    We denote the time taken for this step by $t_3$.
\end{enumerate}
A point on the $x$-axis of any graph in this section correspond to a single
choice of $A$ and $X$. The points on the $x$-axes are sorted in increasing
order according to the $M / \genset{A}$. In other words, $M / \genset{A}$
indicates the proportion of new elements generated in \textsf{Closure}.
The $y$-axis corresponds to time divided by $t_3$ from
\eqref{closure-benchmark-step-3}.  The triangles correspond to the mean
value of $t_2 / t_3$ taken over 3 separate runs, and the crosses
similarly correspond to $(t_1 + t_2) / t_3$. 

The values of $A$ and $X$ in Figure~\ref{fig-bmats-1} were chosen as follows:
\begin{itemize}
  \item 
    the collection $A$ was chosen to have size between $2$ and $30$ elements
    (uniformly at random) and to consist of $6\times
    6$ Boolean matrices, which were chosen uniformly at random from the space
    of all such Boolean matrices.
  \item 
    the collection $X$ was taken to consist of a single $6\times
    6$ Boolean matrices, again chosen uniformly at random.
\end{itemize}
The values of $A$ and $X$ in Figure~\ref{fig-trans-1} were chosen in a similar
fashion from the space of all transformations of degree $7$, where $|A| \in
\{2, \ldots, 8\}$ and $X$ was chosen so that $|X| = 1$. In
Figures~\ref{fig-bmats-2} and~\ref{fig-trans-2}, $A$ and $X$ were chosen in the
same way as collections of Boolean matrices, and transformations, with  $|A|,
|X| \in \{2, \ldots, 30\}$, and $|A|, |X| \in \{2, \ldots, 8\}$, respectively.
The semigroups in
Figures~\ref{fig-bmats-1},~\ref{fig-trans-1},~\ref{fig-bmats-2},
and~\ref{fig-trans-2} range in size from 1767 to 681973, with time to run
\textsf{FroidurePin} and \textsf{Closure} roughly in the range 10 to 1500
milliseconds, particular values chosen for $A$ and $X$ can be found in the
table below.  We rejected those choices of $A$ and $X$ where the time $t_1$ to
enumerate $\genset{A}$ using \textsf{FroidurePin} was less than 10
milliseconds, to reduce the influence of random noise in the resulting data.

The range of values for $|A|$, the dimensions of these matrices, and degrees of
these transformations were chosen so that the resulting semigroups were not too
large or small, and it was possible to run
\textsf{FroidurePin} on these
semigroups in a reasonable amount of time. 

\begin{figure}
  \centering
  \includegraphics[width=0.8\textwidth]{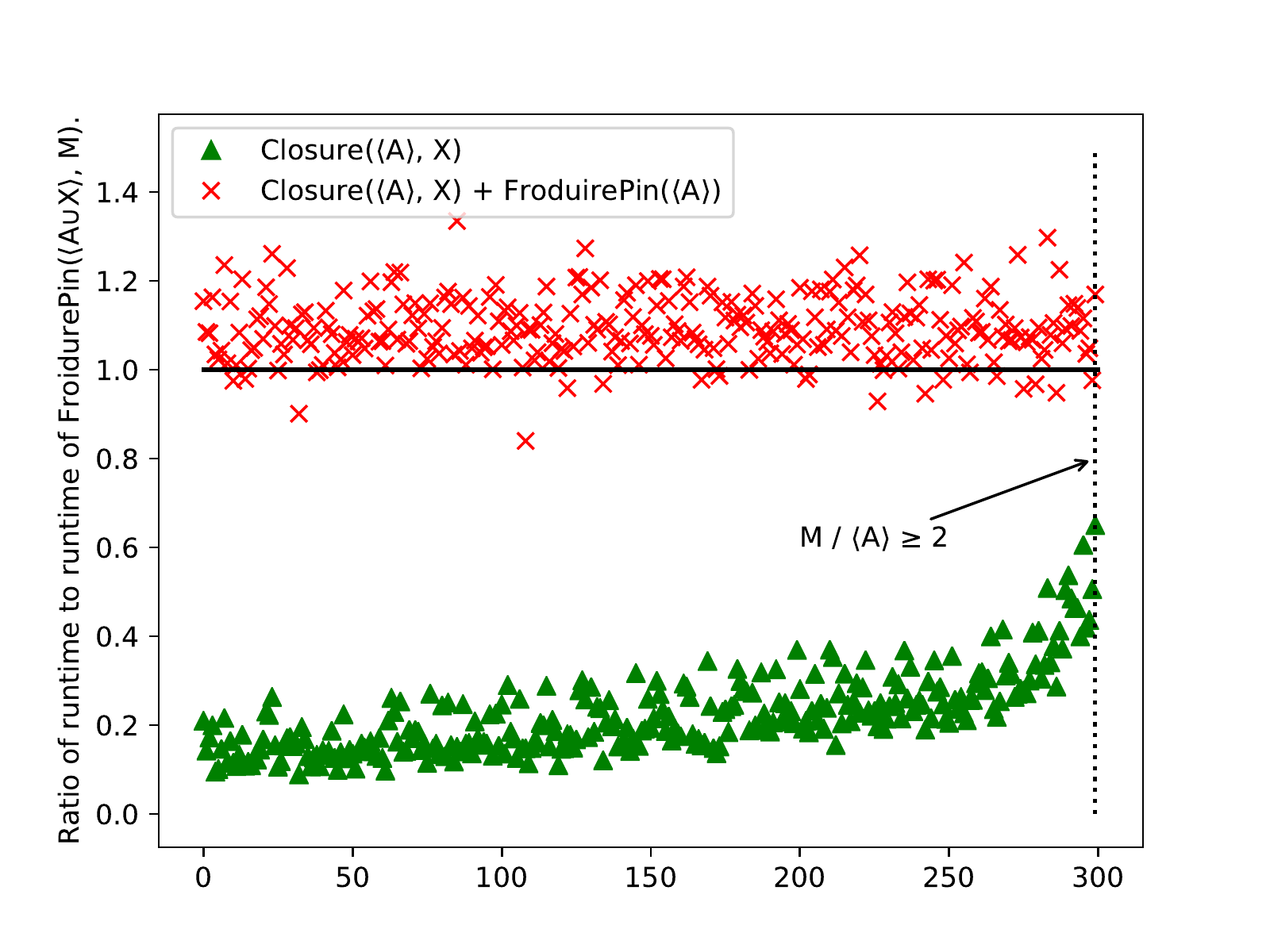}
  \caption{$A$ and $X$ consist of $6\times 6$ Boolean matrices, $|A|\in
  \{2,\ldots, 30\}$, $|X| = 1$, $M$ is the size of the snapshot for $\genset{A,
  X}$ returned by Algorithm~\ref{algorithm-closure} (\textsf{Closure}).}
  \label{fig-bmats-1}
\end{figure}

\begin{figure}
  \centering
  \includegraphics[width=0.8\textwidth]{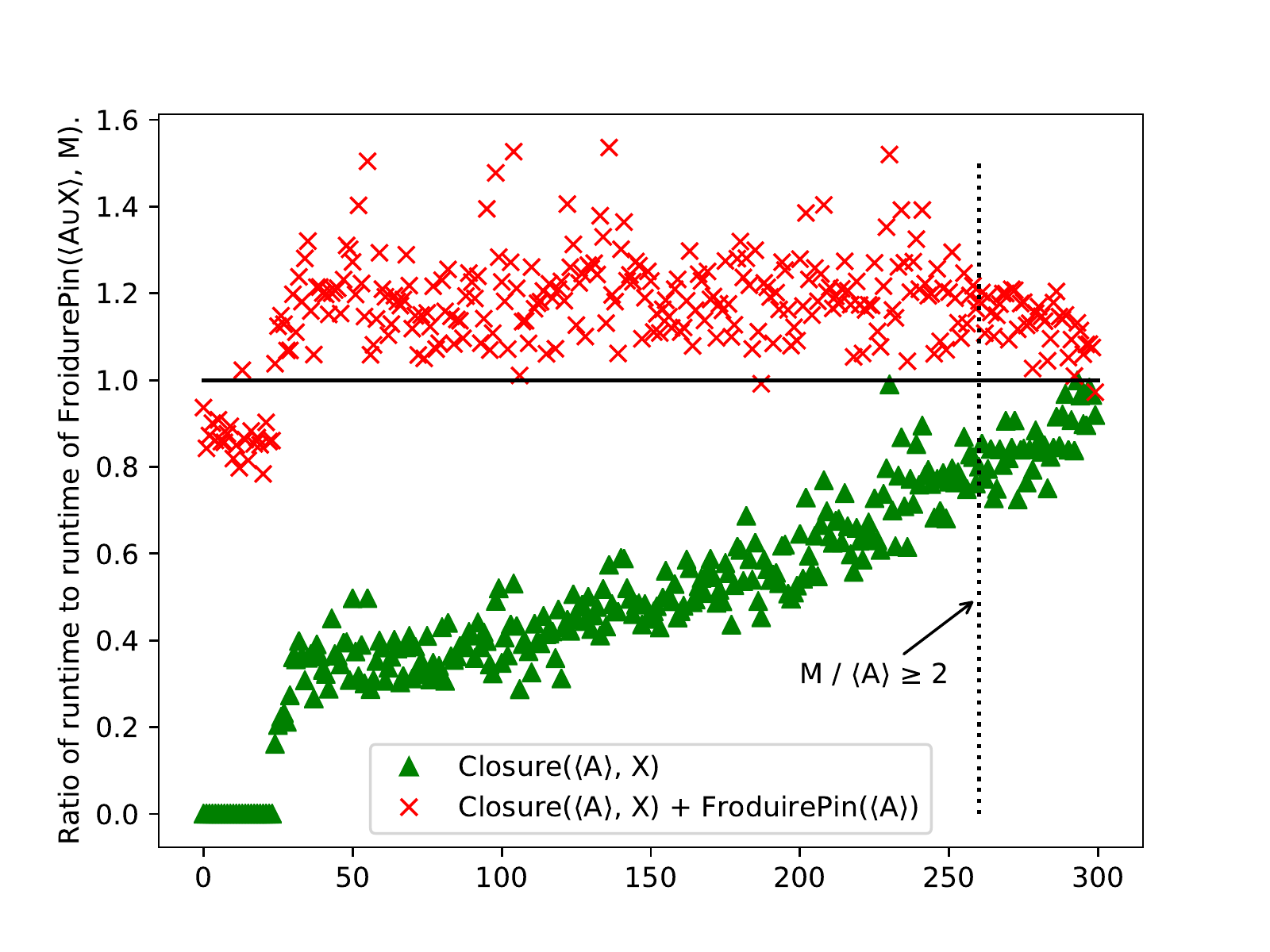}
  \caption{$A$ and $X$ consist of transformations of degree $7$, $|A|\in
  \{2,\ldots, 8\}$, $|X| = 1$, $M$ is the size of the snapshot for $\genset{A,
  X}$ returned by Algorithm~\ref{algorithm-closure} (\textsf{Closure}).}
  \label{fig-trans-1}
\end{figure}

\begin{figure}
  \centering
  \includegraphics[width=0.8\textwidth]{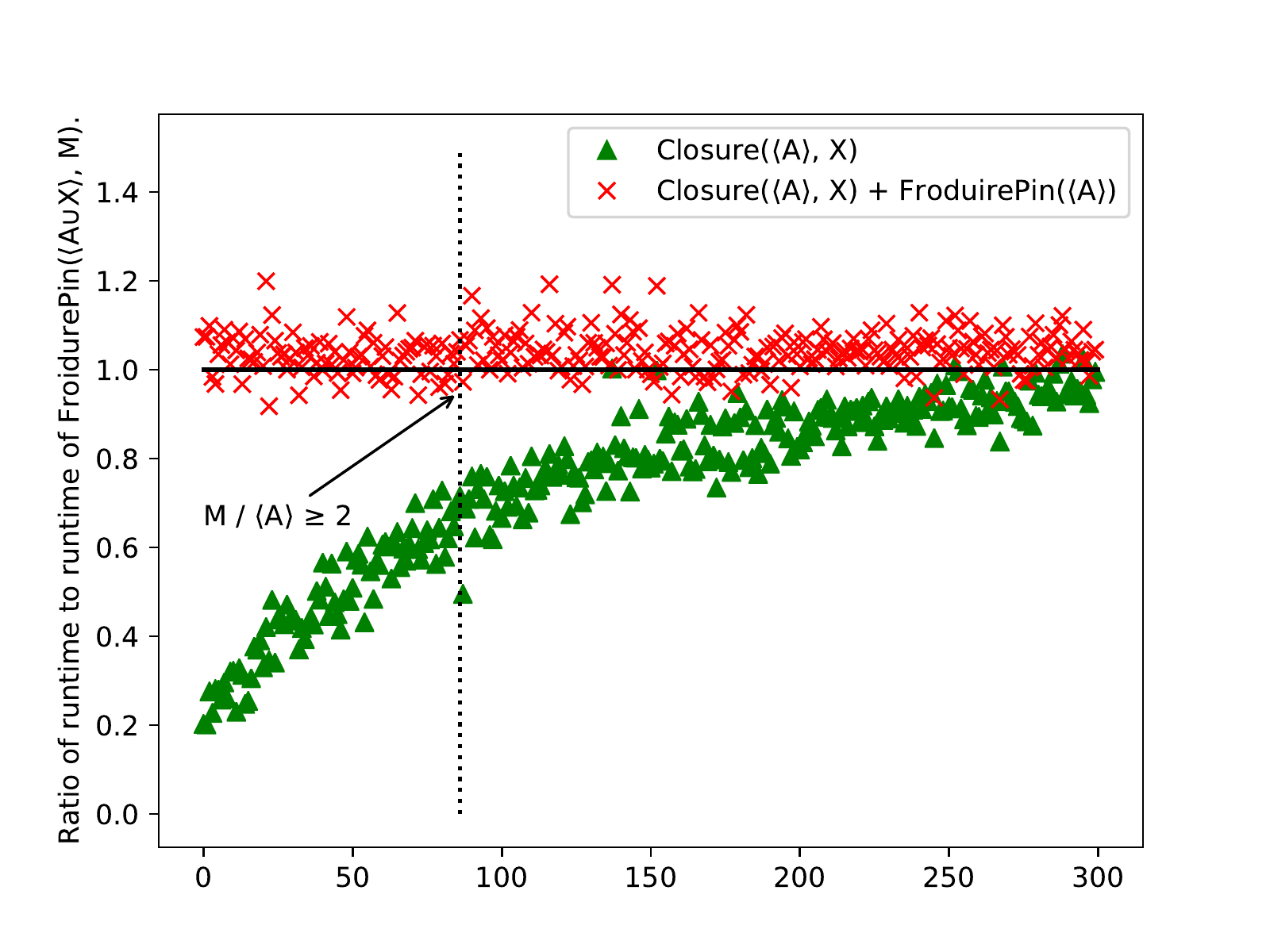}
  \caption{$A$ and $X$ consist of $6\times 6$ Boolean matrices, $|A|, |X| \in
  \{2,\ldots, 30\}$, $M$ is the size of the snapshot for $\genset{A,
  X}$ returned by Algorithm~\ref{algorithm-closure} (\textsf{Closure}).}
  \label{fig-bmats-2}
\end{figure}

\begin{figure}
  \centering
  \includegraphics[width=0.8\textwidth]{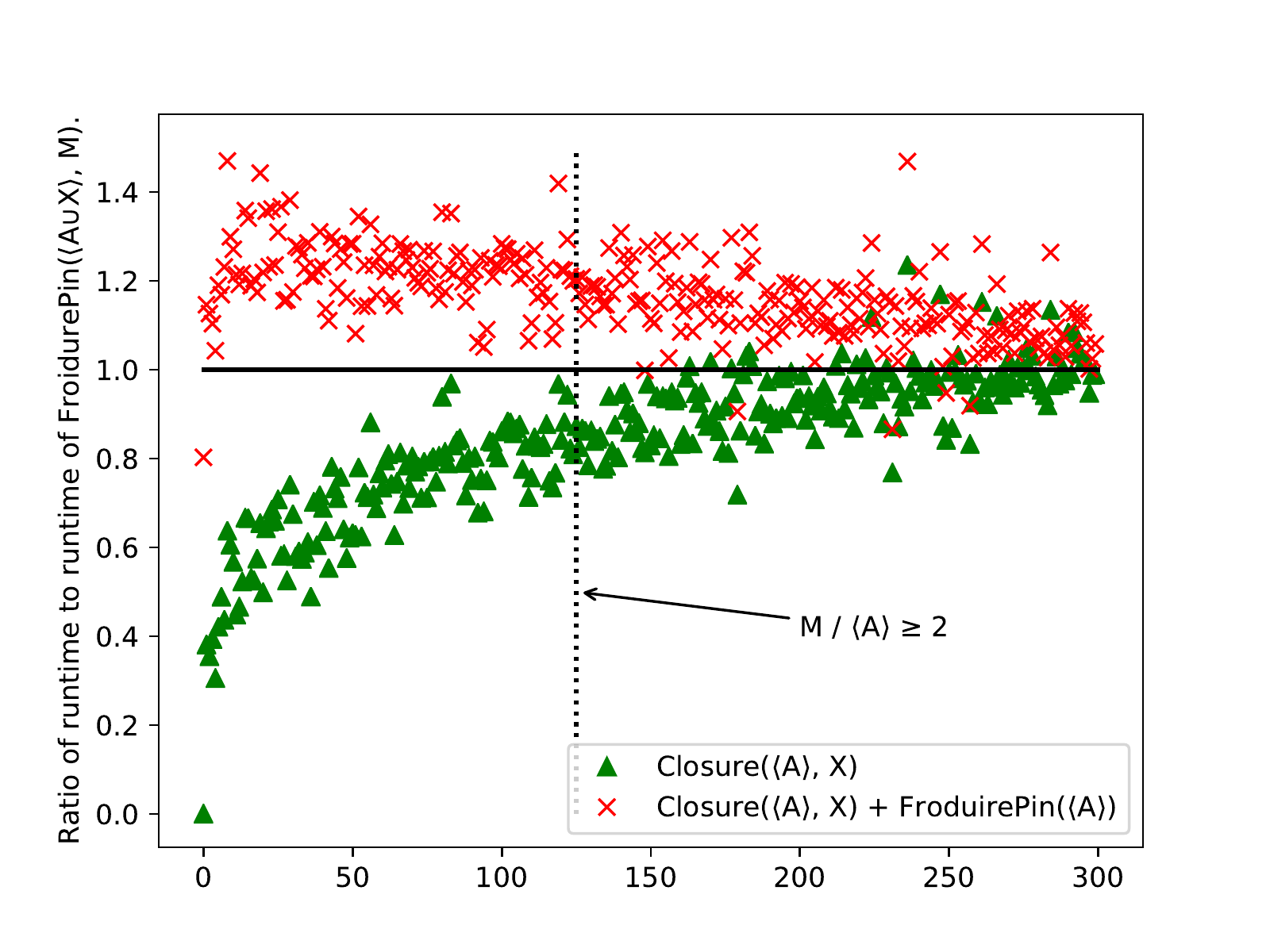}
  \caption{$A$ and $X$ consist of transformations of degree $7$, $|A|, |X|\in
  \{2,\ldots, 8\}$, $M$ is the size of the snapshot for $\genset{A,
  X}$ returned by Algorithm~\ref{algorithm-closure} (\textsf{Closure}).}
  \label{fig-trans-2}
\end{figure}

It is not surprising to observe that when $M / \genset{A}$ is large that there
is little benefit, or even a penalty, for running \textsf{Closure}. This due to
the fact that the run time $t_1$ of \textsf{FroidurePin} on $\genset{A}$ is
small in comparison to $t_2$ and $t_3$. In this case, in \textsf{Closure} applied
to $\genset{A}$ and $X$, only a relatively small proportion of multiplications
can be deduced, and so \textsf{Closure} is essentially doing the same
computation as \textsf{FroidurePin}, but with an additional overhead. 
It does appear from these examples, that
\textsf{Closure} is beneficial when $M / \genset{A} \leq 2$.

%%%%%%%%%%%%%%%%%%%%%%%%%%%%%%%%%%%%%%%%%%%%%%%%%%%%%%%%%%%%%%%%%%%%%%%%%%%%%%%%

\section{A lock-free concurrent version of the Froidure-Pin Algorithm}
\label{section-parallel}

In this section we describe a version of the Froidure-Pin Algorithm that can be
applied concurrently in multiple distinct processes.  Roughly speaking, the
idea is to split a snapshot of a semigroup $S$ into fragments where an
algorithm similar to Algorithm~\ref{algorithm-froidure-pin}
(\textsf{FroidurePin}) can be applied to each fragment independently. Similar
to the Froidure-Pin Algorithm the value of a right multiple of an element with
a generator is deduced whenever possible. If such a right multiple cannot be
deduced, then it is stored in a queue until all fragments are synchronised. The
original Froidure-Pin Algorithm already offers a natural point for this
synchronisation to occur, when the right multiples of all words of a given
length have been determined. In the concurrent algorithm, the fragments are
synchronised at exactly this point.  In this way, the concurrent algorithm has
two phases: the first when right multiples are determined, and the second when
the fragments are synchronised.  Some of the deductions of products that are
made by the original Froidure-Pin Algorithm cannot be made in our concurrent
version. This happens if the information required to make a deduction is, or
will be, contained in a different fragment.  In order to avoid locking, a
fragment does not have access to any information in other fragments during the
first phase.  In the second phase, each fragment $F$ can only access the words
queued by all fragments for $F$. While this means that the concurrent algorithm
potentially performs more actual multiplications of elements than the original,
in Section~\ref{subsect-parallel-experiment} we will see that these numbers of
multiplications are almost the same. Of course, we could have allowed the
fragments to communicate during the first phase but in our experiments this was
significantly slower. The evidence presented in
Section~\ref{subsect-parallel-experiment}
supports this, the time spent waiting for other fragments to provide
information outweighs the benefit of being able to deduce a relatively small
additional number of products. 

Throughout this section, we again let $U$ be any semigroup, let $S$ be a
subsemigroup of $U$ generated by $A\subseteq U$, and let $\nu: A ^+ \to S$ be
the unique homomorphism extending the inclusion map.  We also define $f, l: A^+
\to A$ and $p, s: A^+ \to A^*$ to be the functions defined at the start of
Section~\ref{section-fropin}.

The following is the precise definition of a fragment required for our
concurrent algorithm.

\begin{de}
  \label{de-fragment}
  A \textit{fragment for $S$} is a tuple $(A, Y, K, \phi)$ where:

  \begin{enumerate}[(a)]
      
    \item $A = \{a_1, \ldots, a_r\}$ is a finite collection of generators for
      $S$ where $a_1 < \cdots < a_r$;

    \item $Y = \{y_1, \ldots, y_N\}$ is a
      collection of reduced words for $S$ and $y_1 < y_2 < \cdots < y_N$;

    \item 
      $1\leq K\leq |Y| + 1$ and if $K = 1$, then $Y = A$, or if $K > 1$, then
      $y_{K - 1}= \max\set{y_i\in Y}{|y_i| = |y_{K -1}|}$;

    \item 
      if  $R\subseteq A ^+$ is the set of all reduced words for elements of
      $S$, then 
      $$\phi: \big(A\times \{y_1, \ldots, y_{L}\}\big)\cup \big(\{y_1, \ldots,
      y_{K-1}\} \times A\big) \to R$$
      where $\nu(\phi(u, v)) = \nu(uv)$ for all $u, v\in \dom(\phi)$ and 
      either:
      \begin{enumerate}[(i)] 
        \item $L = K - 1$; or
        \item $L < K - 1$ and $L$ is the largest value such
          that $|y_{L}| < |y_{K - 1}|$.
      \end{enumerate}
  \end{enumerate}
\end{de}

Note that in part (c) the condition that $y_{K - 1}= \max\set{y_i\in Y}{|y_i| =
|y_{K -1}|}$ implies that either $y_{K}$ does not exist or $|y_K| > |y_{K
-1}|$.  We refer to the size, elements, extension, and so on of a fragment for
a semigroup $S$, in the same way as we did for the snapshots of $S$. 

We suppose throughout this section that however $\phi$ in
Definition~\ref{de-fragment} is actually implemented, it supports
concurrent reads of any particular value $\phi(x, y)$ when $(x,y) \in
\dom(\phi)$ and that it is possible to define $\phi(x, y)$ for any $(x, y)\not
\in \dom(\phi)$ concurrent with any read of $\phi(x', y')$ for $(x', y')\in
\dom(\phi)$. Of course, the difficulty is that we do not necessarily know
whether $(x, y)$ belongs to $\dom(\phi)$ or not. If
one process is defining $\phi(x, y)$ and another is 
trying to read $\phi(x, y)$, it is not possible to determine whether $(x,y)$
belongs to $\dom(\phi)$ without locking, which we want to avoid. We avoid this
in Algorithm~\ref{algorithm-parallel} (\textsf{ConcurrentFroidurePin}) because
$\phi$ is defined for those $(w, a)$ and $(a, w)$ where  $a\in A$ and $w$ is a
reduced word whose length is strictly less than  the minimum length of a word
some of whose right multiples are not known. 

The implementation in
\libsemigroups~\cite{Mitchell2016ab} represents $\phi$ as a pair of C++ Standard
Template Library vectors, which support this behaviour provided that no
reallocation occurs when we are defining $\phi(x,y)$ for $(x,y)\not\in
\dom(\phi)$. In \textsf{ConcurrentFroidurePin}, all of the reduced words
$w\in A ^ +$ of a given length are produced before any value of $\phi(w, a)$ or
$\phi(a, w)$ is defined, and so we can allocate enough memory to accommodate
these definitions and thereby guarantee that it is safe to read and write
values of $\phi(x,y)$ concurrently.

If $(A, A, 1, \varnothing, \varnothing)$ is a minimal snapshot
for $S$, we refer to the collection of fragments
$$(A, \set{a\in A}{b(a) = j}, 1, \varnothing)\quad j \in \{1,
\ldots, k\}$$
as a \textit{minimal collection of fragments} for $S$.

The next lemma describes when some fragments for $S$ can be assembled into a
snapshot of $S$. 

\begin{lem}\label{lem-union-fragments}
  Suppose that $(A, Y_1, K_1, \phi_1), \ldots, (A, Y_k, K_k, \phi_k)$ is a
  collection of fragments for $S$ which is either minimal or $Y_i = \{y_{i, 1},
  \ldots, y_{i, N_i}\}$ and the following hold:
  \begin{enumerate}[\rm (i)]
    \item 
      $Y_i\cap Y_j = \varnothing$ for all $i, j$, $i\not = j$;

    \item 
      for all reduced $w\in A^+$ either 
      $$w\in \bigcup_{i=1}^{k} Y_i\quad\text{or}\quad w > \max\bigcup_{i=1}^{k}
      Y_i;$$

    \item 
      if $K_j > 1$, then $|y_{j, K_j - 1}| = \max\set{|y|}{y\in Y_j,\ |y| \leq
      c}$ where $c = \max\set{|y_{i, K_i - 1}|}{1\leq i \leq k,\ K_i > 1}$

    \item 
      $\dom(\phi_i) = (A\times \{y_{i, 1}, \ldots, y_{i, K_i - 1}\}) \cup (\{y_{i,
      1}, \ldots, y_{i, K_i - 1}\} \times A)$ for all $i$; and

    \item \label{de-frag-v}
      $\im(\phi_i) \subseteq  \bigcup_{j=1}^{k} Y_j$ for all $i$.

  \end{enumerate}
  If $Y = \bigcup_{i=1}^{k}Y_i = \{y_1, \ldots, y_{N}\}$, $y_1 < \cdots <
  y_N$, $K\in \N$ is such that $y_{K - 1} = \max\set{y \in Y}{|y| =
  c}$, and $\phi = \bigcup_{i = 1} ^ k
  \phi_i$,  then $(A, Y, K, \varnothing, \phi)$ is a snapshot of $S$.
\end{lem}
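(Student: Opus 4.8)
The plan is to verify that the assembled tuple $(A, Y, K, \varnothing, \phi)$ satisfies each of the conditions (a)--(e) of Definition~\ref{de-data-structure}. The minimal case is immediate: there $Y = A$, $K = 1$, and $\phi = \varnothing$, so the tuple is literally the minimal snapshot~\eqref{eq-minimal}. Hence I assume throughout that conditions (i)--(v) hold and verify (a)--(e) in turn; conditions (a), (c), and (d) are essentially bookkeeping, so the real content lies in (b) and (e).

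For (a), the generating set $A$ is common to every fragment and is unchanged. For (b), $Y = \bigcup_i Y_i$ is a set of reduced words because each $Y_i$ is, and it is listed in short-lex order by construction; the requirement that every reduced $w \in A^+$ satisfies $w \in Y$ or $w > y_N$ is precisely condition (ii), while $A \subseteq Y$ follows because the generators are the shortest reduced words and so lie in this initial segment. For (c), the index $K$ is well-defined because $c$ is the maximum of a nonempty set of lengths, so $y_{K-1} = \max\set{y \in Y}{|y| = c}$ exists in $Y$, giving $1 \leq K \leq |Y| + 1$; and (d) holds trivially since the fourth component is $\varnothing$.

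The crux is (e), where I must show that $\phi = \bigcup_i \phi_i$ has exactly the domain prescribed by Definition~\ref{de-data-structure}(e), with $L = K - 1$. First I would rewrite the domain of each fragment: by condition (iv), $\dom(\phi_i) = (A \times P_i) \cup (P_i \times A)$ where $P_i = \{y_{i,1}, \ldots, y_{i, K_i - 1}\}$. The key step is to describe $P_i$ in terms of length: combining condition (iii) with Definition~\ref{de-fragment}(c) (which makes $y_{i, K_i - 1}$ the last word of its length in $Y_i$) yields $P_i = \set{y \in Y_i}{|y| \leq c}$. Taking the union over $i$ and using the disjointness (i) gives $\bigcup_i P_i = \set{y \in Y}{|y| \leq c}$, and because short-lex orders words first by length and $y_{K-1}$ is the largest word of length $c$, this set is exactly $\{y_1, \ldots, y_{K-1}\}$. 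Hence $\dom(\phi) = \bigcup_i \dom(\phi_i) = (A \times \{y_1, \ldots, y_{K-1}\}) \cup (\{y_1, \ldots, y_{K-1}\} \times A)$, which is the required domain with $L = K - 1$.

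It then remains to check that $\phi$ is a well-defined function into $Y$ satisfying $\nu(\phi(u,v)) = \nu(uv)$. The value condition holds for each $\phi_i$ by Definition~\ref{de-fragment}(d), and $\im(\phi_i) \subseteq Y$ is condition (v), so $\im(\phi) \subseteq Y$. Well-definedness is the one point requiring a little care: where the domains $\dom(\phi_i)$ overlap (which, since the $Y_i$ are disjoint, can occur only on pairs both of whose coordinates are generators), both fragments assign a reduced word representing the same product $\nu(uv)$, and as reduced-word representatives are unique the two assignments agree. The main obstacle I anticipate is precisely this length-bookkeeping in (e)---correctly translating the per-fragment index ranges $K_i$ into the single global index $K$ after sorting the disjoint union---rather than any genuinely deep difficulty; conditions (i)--(v) have evidently been arranged so that this translation goes through, after which the remaining verifications are routine.
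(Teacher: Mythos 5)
Your proof is correct and follows essentially the same route as the paper's: the minimal case is dispatched immediately, conditions (a)--(d) of Definition~\ref{de-data-structure} are noted to be routine, and the substance is the identification of $\dom(\phi)$ with $(A\times\{y_1,\ldots,y_{K-1}\})\cup(\{y_1,\ldots,y_{K-1}\}\times A)$ via conditions (iii), (iv) and Definition~\ref{de-fragment}(c), together with $\im(\phi)\subseteq Y$ from (v) and the homomorphism condition inherited from each $\phi_i$. Your explicit treatment of well-definedness on the overlaps $(A\cap P_j)\times(P_i\cap A)$ is in fact slightly more careful than the paper's one-line appeal to condition (i).
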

\begin{proof} 
  If the collection of fragments is minimal, then clearly the union, as defined
  in the statement, is a minimal snapshot for $S$. 
  
  Suppose that the collection of fragments is not minimal. It follows that
  there exists $i\in \{1, \ldots, k\}$ such that $K_i > 1$, and so the value
  $c$ in part (iii) is well-defined.
  It is clear that Definition~\ref{de-data-structure}(a) to (d) are satisfied.
  It therefore suffices to show that part (e) of
  Definition~\ref{de-data-structure} holds. Part (i) of the assumption of this
  lemma implies that $\phi$ is well-defined.
  It suffices to verify that
  $$\dom(\phi) = (A\times \{y_{1}, \ldots, y_{K - 1}\}) \cup (\{y_{
      1}, \ldots, y_{K- 1}\} \times A),\quad \im(\phi) \subseteq Y,\quad
      \text{and}\quad \nu(\phi(u, v)) = \nu(uv)$$
  for all $u, v\in \dom(\phi)$. That  $\im(\phi) \subseteq Y$ follows
  from~\eqref{de-frag-v}. If $(u,v) \in \dom(\phi)$, then 
  $(u, v)\in\dom(\phi_i)$ for some $i$, and so 
  $\nu(\phi(u, v)) = \nu(\phi_i(u, v)) = \nu(uv)$.
 
  Let $(a, y_i) \in A\times \{y_1, \ldots, y_{K - 1}\}$. Then $y_i\in Y_j$ for
  some $j$ and so $y_i = y_{j,t}$ for some $t$. But $|y_{j, t}| = |y_i| \leq
  |y_{j, K_j - 1}|$ by the definition of $K$ and part (iii) of the assumption
  of this lemma. Hence either $|y_{j, t}| <  |y_{j, K_j - 1}|$ and so $y_{j, t}
  < y_{j, K_j - 1}$, or $|y_{j, t}| =  |y_{j, K_j - 1}|$ and, by
  Definition~\ref{de-fragment}(c), $y_{j, t} < y_{j, K_j - 1}$. In either
  case, $t \leq K_j - 1$, and so $(a, y_i) = (a, y_{j, t})\in \dom(\phi_j)
  \subseteq \dom(\phi)$.  If $(y_i, a) \in \{y_1, \ldots, y_{K - 1}\} \times
  A$, then $(y_i, a) \in \dom(\phi)$ by a similar argument. 

  If $(a, y_i)\in \dom(\phi)$, then $(a, y_i) \in \dom (\phi_j)$ for some $j$.
  Hence $y_i \in \{y_{j, 1}, \ldots, y_{j, K_j - 1}\}$. It follows that,  since
  $y_{K-1} = \max\set{y\in Y}{|y| = c}$, $y_{K -1} \geq 
  y_{j, K_j-1} \geq y_i$, and so $(a, y_i) \in A\times \{y_1, \ldots, y_{K -
  1}\}$. If $(y_i, a)\in \dom(\phi)$, then $(y_i, a) \in \{y_{i, 1}, \ldots, y_{i,
  K_i - 1}\} \times A$ by a similar argument. 
  Therefore $\dom(\phi) = (A\times \{y_{1}, \ldots, y_{K - 1}\}) \cup (\{y_{
      1}, \ldots, y_{K- 1}\} \times A)$, as required.
\end{proof}

In Algorithm~\ref{algorithm-apply-gens} (\textsf{ApplyGenerators}), and more
generally in our concurrent version of the Froidure-Pin Algorithm, we require a
method for assigning reduced words $w\in A^+$ that do not belong to any
existing fragment for $S$, to a particular fragment for $S$. If we want to
distribute $S$ into $k$ fragments, then we let $b: R := \set{w\in A^+}{w\
\text{is reduced for }S} \to \{1, \ldots, k\}$ be any function. Preferably so
that our algorithms are more efficient, $b$ should have the property that
$|b^{-1}(i)|$ is approximately equal to $|R|/ k$ for all $i$.  For example, we
might take a hash function for $\nu(w)$ modulo $k$, as the value of $b(w)$.  If
the number of fragments $k = 1$ or $b(w)$ is constant for all reduced words $w$
for $S$, then \textsf{ConcurrentFroidurePin} is just \textsf{FroidurePin} with
some extra overheads.

\begin{algorithm}
\SetKwComment{Comment}{}{}
\DontPrintSemicolon
  \KwIn{A collection of fragments $(A, Y_1, K_1, \phi_1), \ldots, (A, Y_k, K_k,
      \phi_k)$ for a semigroup $S$ satisfying the hypothesis of
      Lemma~\ref{lem-union-fragments} and $j\in \{1, \ldots, k\}$.}
  \KwOut{A set $Q_j = \set{(b(w), w)}{|w| = c + 1} \subseteq \{1,
  \ldots, k\} \times A ^ + \setminus (Y_1\cup\cdots \cup Y_k)$ where $c =
  \max\set{|y_{i, K_i - 1}|}{1\leq i \leq k}$ and the tuple $(A, Y_j, K_j,
  \phi_j)$.}
      $Q_j := \varnothing$ \Comment*[r]{$Q_j$ is a container for new words}
    \While{$K_j \leq |Y_j|$ and $|y_{j, K_j}| = c$ \label{algorithm-apply-gens-5}}
     {
      \For(\Comment*[f]{loop over the generators in (short-lex) order}){$a\in A$} {
        \If(\Comment*[f]{$s(y_{j, K_j})a$ is not reduced}) {$\phi_j(s(y_{j, K_j}), a) < s(y_{j, K_j})a$\label{algorithm-apply-gens-1}}{

            $y := \phi_{i}(s(y_{j, K_j}), a)$ where $s(y_{j, K_j}) \in Y_{i}$\;
            \label{algorithm-apply-gens-3}
            $w := \phi_{m}(f(y_{j, K_j}), p(y)) \in Y_n$ where $p(y) \in Y_{m}$\;
          \If{$n = j$ or $|w| < c$} {
            $\phi_j(y_{j, K_j}, a) := \phi_n(w, l(y))$ 
            \label{algorithm-apply-gens-4} \;
            continue \;
            }
          }

        \uIf{$\nu(y_{j, K_j}a) = \nu(y)$ for some $y \in Y_1 \cup \cdots \cup
        Y_k$} {
           $\phi_j(y_{j, K_j}, a) := y$ \;
        }\Else %\label{algorithm-apply-gens-2} 
        { 
          $Q_j\gets Q_j \cup \{(b(y_{j, K_j}a), y_{j, K_j}a)\}$ \;
        }
      }
      $K_j \gets K_j + 1$ \;
     }
     \Return  $Q_j$ and $(A, Y_j, K_j, \phi_j)$
  \caption{\textsf{ApplyGenerators}}
  \label{algorithm-apply-gens}
\end{algorithm}

\begin{lem}
  \label{lem-fropin-parallel-0}
  Algorithm~\ref{algorithm-apply-gens} (\textsf{ApplyGenerators}) can be
  performed concurrently on each fragment $(A, Y_j, K_j, \varnothing, \phi_j)$
  of its input. 
\end{lem}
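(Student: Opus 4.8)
The plan is to verify that the $k$ processes obtained by running \textsf{ApplyGenerators} with input index $j$, one for each $j \in \{1, \ldots, k\}$, do not interfere: each process writes only to data belonging to its own fragment, and every read of data that another process might write touches only values that are fixed throughout the phase. By the assumptions recorded before the lemma --- namely that an already-defined value $\phi(x,y)$ may be read concurrently with the definition of $\phi(x',y')$ at a \emph{different} argument, whereas reading $\phi(x,y)$ while it is being defined is unsafe --- this non-interference is exactly what is needed for lock-free concurrent execution.

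\textbf{The writes.} First I would catalogue the writes. Inspecting the algorithm, the process with index $j$ modifies only $\phi_j$ (through the assignments $\phi_j(y_{j,K_j},a) := \cdots$), the counter $K_j$, and the private queue $Q_j$. In particular no process enlarges any $Y_i$ (a newly found reduced word is placed in $Q_j$, never in $Y_j$), and no process alters $\phi_i$ or $K_i$ for $i \neq j$. Hence distinct processes have disjoint write sets, so there is no write--write conflict, and moreover every $Y_i$ and every $K_i$ is constant during the phase.

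\textbf{The reads.} Write $c = |y_{j,K_j}|$ for the common length of the words processed by the loop, so that every value defined during this phase is a right multiple $\phi_i(u,a)$ with $a \in A$ and $|u| = c$. By the hypotheses of Lemma~\ref{lem-union-fragments} (and the fact that the input is a freshly synchronized collection of fragments, so that $L_i = K_i - 1$), the fragments assemble via $\phi = \bigcup_i \phi_i$ into a snapshot of $S$ in which the right and left multiples of every reduced word of length at most $c-1$ are already defined. The reads performed by process $j$ are of the sets $Y_i$ (to locate the fragment containing a suffix or prefix, and to evaluate $\nu(y_{j,K_j}a) = \nu(y)$), which are safe because each $Y_i$ is constant; and of values $\phi_i(x,y)$, which I treat term by term. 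The read $\phi_i(s(y_{j,K_j}),a)$ is a right multiple of $s(y_{j,K_j})$, a word of length $c-1$, hence already defined and not among the values written now. Since $\nu(y) = \nu(s(y_{j,K_j})a)$ with $y$ reduced gives $y \leq s(y_{j,K_j})a$ and so $|p(y)| \leq c-1$, the read $\phi_m(f(y_{j,K_j}),p(y))$ is a left multiple of a word of length at most $c-1$, again already defined.

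\textbf{The main obstacle.} The hard part will be the remaining read $\phi_n(w,l(y))$, which is guarded by ``$n = j$ or $|w| < c$''. When $|w| < c$ it is a right multiple of a short word, already defined and safe even if $n \neq j$; the guard therefore excludes exactly the one racy possibility, $n \neq j$ together with $|w| = c$, in which $\phi_n(w,l(y))$ could be in the course of being defined by process $n$. The delicate case is $n = j$ with $|w| = c$: here the read is from the process's own fragment, so there is no cross-process concurrency, but one must still check that $\phi_j(w,l(y))$ has already been assigned by the single sequential process when it is read. For this I would use $y \leq s(y_{j,K_j})a$ with Proposition~\ref{prop-fropin-1}(b) to get $p(y) \leq s(y_{j,K_j})$, and then Proposition~\ref{prop-fropin-1}(a),(c) to obtain $w \leq f(y_{j,K_j})\,p(y) \leq f(y_{j,K_j})\,s(y_{j,K_j}) = y_{j,K_j}$; thus either $w$ strictly precedes $y_{j,K_j}$ in the loop, so its right multiples were set in an earlier iteration, or $w = y_{j,K_j}$, in which case one checks that $l(y)$ precedes $a$ in the generator order so that $\phi_j(y_{j,K_j},l(y))$ is already set. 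Once this is confirmed, every concurrent read touches either an immutable $Y_i$ or an already-defined value of $\phi_i$, which by the stated assumptions coexists safely with the concurrent definition of new values of $\phi_i$ at other arguments. Hence the processes may be run concurrently without locking, as claimed.
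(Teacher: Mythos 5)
Your argument is correct and follows essentially the same route as the paper's proof: catalogue the writes (only $\phi_j(y_{j,K_j},a)$ with $|y_{j,K_j}|=c$, plus the private $Q_j$ and $K_j$), observe that every cross-fragment read is of a value $\phi_d(u,a)$ or $\phi_d(a,u)$ with $|u|<c$ that was fixed before the phase began, and note that the guard ``$n=j$ or $|w|<c$'' excludes the one potentially racy read. The only difference is that you spell out the intra-process definedness of $\phi_j(w,l(y))$ in the case $n=j$, $|w|=c$ via Proposition~\ref{prop-fropin-1}, where the paper defers this to the proof of Lemma~\ref{lem-fropin-1}; this is a welcome clarification rather than a departure.
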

\begin{proof}
  Every value assigned to $\phi_j$ in 
  \textsf{ApplyGenerators} equals a value for $\phi$ defined in
  \textsf{Update}. It is possible that some
  assignments made in \textsf{Update} for $\phi$ cannot be
  made for $\phi_j$ in \textsf{ApplyGenerators}. In particular, in \textsf{Update}
  if $s(y_{j, K_j})a$ is not reduced, then $\phi_j(s(y_{j, K_j}), a)$ is always
  defined in \textsf{Update} but is only defined in some cases
  in \textsf{ApplyGenerators}.  Hence that $\phi_j$ is well-defined
  follows by the proof of Lemma~\ref{lem-fropin-1}.
  
  The values $\phi_i(s(y_{j, K_j}), a)$, $\phi_{m}(f(y_{j, K_j}), p(y))$, and
  $\phi_{n}(w, l(y))$ are read in \textsf{ApplyGenerators} and may belong to
  other fragments.  But $|s(y_{j, K_j})|, |p(y)| < 
  c$ and $\phi_{n}(w, l(y))$ is only used if $n = j$ or $|w| < c$.
  The only value which is written in \textsf{ApplyGenerators}
  is $\phi_j(y_{j, K_j}, a)$, and $|y_{j, K_j}| = c$. 
  It follows that \textsf{ApplyGenerators} only reads values of $\phi_{d}(u,
  a)$ or $\phi_{d}(a, u)$  when $d \not= j$ and $|u| < c$, while the algorithm
  only writes to values of  $\phi_j(u, a)$ when $|u| = c$.  Therefore there are
  no concurrent reads and writes in \textsf{ApplyGenerators}.
\end{proof}

Note that after applying \textsf{ApplyGenerators}, the tuple $(A, Y_j, K_j,
\phi_j)$ is no longer a fragment because Definition~\ref{de-fragment}(d) may
not be satisfied.

The next algorithm, Algorithm~\ref{algorithm-process-queues}
(\textsf{ProcessQueues}), performs the synchronisation step alluded to at the
start of this section. We prove the validity of \textsf{ProcessQueues} in
Lemma~\ref{lem-process-queues-is-valid}.

\begin{algorithm}
\SetKwComment{Comment}{}{}
\DontPrintSemicolon
  \KwIn{The output of Algorithm~\ref{algorithm-apply-gens}
    (\textsf{ApplyGenerators}) for all $m\in \{1, \ldots, k\}$ 
    and $j \in \{1, \ldots, k\}$.}
  \KwOut{A fragment for $S$ extending $(A, Y_j, K_j, \phi_j)$,
      containing every reduced word $wa$ such that $(j, wa) \in Q_1\cup \cdots
      \cup Q_k$.}
    \For(\Comment*[f]{loop over $Q_1\cup\cdots \cup Q_k$ in
    short-lex order on $wa$}){$(b(wa), wa) \in Q_1\cup\cdots\cup Q_k$} {
      \If{$b(wa) = j$} {
        \If{$\nu(wa)= \nu(y)$ for some $y\in Y_j$} {
          $\phi_j(w, a) = y$
        } \Else {
          $Y_j\gets Y_j\cup\{wa\}$ \label{apply-gens-6}\;
          $\phi_j(w, a) := wa$ \;
        }
      }
    }
    \Return  $(A, Y_j, K_j, \phi_j)$
  \caption{\textsf{ProcessQueues}}
  \label{algorithm-process-queues}
\end{algorithm}

\begin{lem}\label{lem-process-queues-is-valid}
  Algorithm~\ref{algorithm-process-queues} (\textsf{ProcessQueues}) returns a
  fragment for $S$ satisfying Definition~\ref{de-fragment}(d)(ii) and 
  it can be performed concurrently on each $j \in \{1, \ldots, k\}$.
\end{lem}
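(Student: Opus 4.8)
The plan is to prove two things about \textsf{ProcessQueues}: correctness (the output is a fragment satisfying Definition~\ref{de-fragment}(d)(ii)) and that the $k$ invocations can run concurrently without conflicting reads and writes. I would treat these as essentially independent goals. For correctness, the key observation is that \textsf{ProcessQueues} only modifies $Y_j$ and $\phi_j$; the components $A$ and $K_j$ are untouched. So Definition~\ref{de-fragment}(a) and (c) are inherited from the input, and I need only verify (b) and (d). The main structural fact to exploit is that the queued words $wa$ all have length exactly $c+1$, where $c = \max\{|y_{i,K_i-1}|\}$, and they are processed in short-lex order.

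First I would establish part (b), that the new $Y_j$ consists of reduced words in short-lex-increasing order. For each $(j,wa)$ processed, the word $wa$ is added to $Y_j$ only when $\nu(wa)\neq\nu(y)$ for every $y\in Y_j$; I would argue $wa$ is genuinely reduced by the same argument used in case (c) of Lemma~\ref{lem-fropin-1} and in the corresponding step of Lemma~\ref{lem-algorithm-closure-is-valid} --- namely, if some shorter or smaller reduced word $u<wa$ represented the same element, then since all reduced words of length $\leq c$ are already distributed among the fragments (by hypothesis (ii) of Lemma~\ref{lem-union-fragments}) and all words of length $c+1$ that are reduced appear in the queues, $u$ would already be present, contradicting $\nu(wa)\neq\nu(y)$ for all $y\in Y_j$ together with $b(wa)=j$. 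Because the loop runs in short-lex order and every newly added word has length $c+1$ (strictly larger than any previously present word of length $\leq c$), the words are appended in increasing order, so part (b) holds. Next, for part (d): after \textsf{ProcessQueues} every reduced word of length $c+1$ assigned to fragment $j$ has been inserted, so $K_j$ now satisfies $y_{j,K_j-1}=\max\{y_{j,t}:|y_{j,t}|=|y_{j,K_j-1}|\}$ with $L=K_j-1$ no longer forced; concretely the fragment now has $\dom(\phi_j)$ including $\{y_{j,1},\ldots,y_{j,K_j-1}\}\times A$ but the left multiples for the newly completed length are not yet filled, which is exactly case (d)(ii). I would state this carefully by identifying the new value of $L$ as the largest index with $|y_{j,L}|<|y_{j,K_j-1}|$.

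For the concurrency claim, I would argue as in Lemma~\ref{lem-fropin-parallel-0}. The critical point is that \textsf{ProcessQueues} for fragment $j$ reads and writes \emph{only} the data of fragment $j$: it tests $\nu(wa)=\nu(y)$ for $y\in Y_j$ and writes $\phi_j(w,a)$ and $Y_j$. The guard $b(wa)=j$ ensures that each queued word is processed by exactly one fragment, so two distinct invocations never touch the same $Y_j$ or the same $\phi_j$. Hence there are no cross-fragment reads at all in this algorithm (unlike \textsf{ApplyGenerators}), and concurrency is immediate.

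The main obstacle I anticipate is the reducedness argument for newly inserted words in part (b). The subtlety is that a word $wa$ of length $c+1$ might fail to be reduced even though $\nu(wa)\neq\nu(y)$ for every $y$ \emph{currently} in $Y_j$, if its true reduced form lives in a different fragment $j'\neq j$. I would need to rule this out by appealing to the fact that the queue $Q_{1}\cup\cdots\cup Q_k$ was constructed from \emph{all} the fragments' outputs in \textsf{ApplyGenerators}, so that any reduced word of length $c+1$ already appears (correctly routed via $b$) and any non-reduced word $wa$ has $\nu(wa)$ matching some shorter word already distributed; since the loop processes $Q_1\cup\cdots\cup Q_k$ in global short-lex order across all fragments, the relevant equal-length witnesses are available in their home fragments by the time $wa$ is examined. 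Once this is pinned down, the remainder reduces to the now-familiar bookkeeping already carried out in Lemmas~\ref{lem-fropin-1}, \ref{lem-fropin-2}, and~\ref{lem-algorithm-closure-is-valid}, which I would cite rather than repeat.
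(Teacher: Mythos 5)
Your overall decomposition --- check (a)--(d) of Definition~\ref{de-fragment} separately, note that $K_j$ is untouched so (c) survives because only words of length $c+1$ are appended, and get concurrency from the fact that fragment $j$ writes only to $Y_j$ and $\phi_j$ while the queues are read-only --- is exactly the paper's argument, and in places (the domain bookkeeping for (d)(ii), the explicit treatment of reducedness) you are more detailed than the paper, which disposes of part (b) in one sentence.

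The one place where your argument does not close is precisely the obstacle you flagged. You rule out a non-reduced $wa$ surviving the test by saying that ``the relevant equal-length witnesses are available in their home fragments by the time $wa$ is examined,'' but the membership test in \textsf{ProcessQueues} consults only $Y_j$, not $Y_{j'}$ for $j'\neq j$, so availability of the witness $u=\overline{wa}$ in \emph{some} fragment is not enough. What actually saves the argument is that $b$ is a function of the element represented (it is defined on reduced words, equivalently on $S$ via $\nu$, e.g.\ a hash of $\nu(w)$), so $b(wa)=b(u)$ and the witness's home fragment \emph{is} $j$: if $|u|\leq c$ then $u\in Y_j$ already by the input hypotheses, and if $|u|=c+1$ then $u$ was queued and, by the global short-lex order of the loop, inserted into $Y_j$ before $wa$ is examined. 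Without this observation that $b$ factors through $\nu$, a non-reduced $wa$ whose reduced form landed in another fragment would be wrongly added to $Y_j$, violating Definition~\ref{de-fragment}(b). Adding that one sentence makes your proof complete and essentially identical in substance to the paper's.
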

\begin{proof}
  If \textsf{ProcessQueues} is run concurrently in $k$ processes, for distinct
  values of $j$, then each process only writes to $\phi_j$ and only reads from
  $Q_1\cup\cdots \cup Q_k$. Hence \textsf{ProcessQueues} can be performed
  concurrently.

  Parts (a) of Definition~\ref{de-fragment}  holds trivially. For part (b), it
  suffices to note that $wa$ added in line~\ref{apply-gens-6}, is a reduced
  word, since we loop over the elements in $Q_1\cup \cdots\cup Q_k$ in short-lex
  order. 
  
  For part (c), we must show that $1\leq K_j \leq |Y_j| + 1$ and that $y_{j,
  K_j - 1}$ is the maximum word in $Y_j$ of length $|y_{j, K_{j} -1}|$.  Since
  $K_j$ is not modified by \textsf{ProcessQueues} the first condition holds.
  \textsf{ApplyGenerators} must have returned to obtain the input for
  \textsf{ProcessQueues}, which implies that $y_{j, K_j - 1}$ is the maximum
  word in $Y_j$ of length $|y_{j, K_{j} -1}|$ before \textsf{ProcessQueues} is
  called. \textsf{ProcessQueues} only adds words to $Y_j$ of length $c + 1 >
  |y_{j, K_{j} -1}|$ and so $y_{j, K_j - 1}$ is the maximum word in $Y_j$, and
  part (c) holds. 
  
  For part (d), suppose that $w \in Y_1\cup \cdots \cup Y_k$ and $a\in A$ are
  such that $\phi_j(w, a)$ is defined in \textsf{ApplyGenerators} or
  \textsf{ProcessQueues}. In either case, $\phi_j(w, a) \in Y$ and since we
  have shown that every element in $Y$ is reduced, $\phi_j(w, a)$ must be also. 

  The input of \textsf{ApplyGenerators} satisfies
  Lemma~\ref{lem-fropin-parallel-0}, in particular part (iv), which states
  that before applying \textsf{ApplyGenerators} 
  $$\dom(\phi_j) = (A\times \set{y\in Y_j}{|y| < c})\cup
  (\set{y\in Y_j}{|y| < c} \times A).$$
  If $a \in A$ and $w\in Y_n$ such that $|w| = c$ and $b(wa) = j$, then either
  \textsf{ApplyGenerators} defines some $\phi_j(w, a)$ or $(j, wa)$ is placed in
  $Q_n$. In the latter case, $\phi_j(w, a)$ is defined in
  \textsf{ProcessQueues}. 
  Hence when  \textsf{ProcessQueues} returns  
  $$\dom(\phi_j) = \big(A\times \{y_{j, 1}, \ldots, y_{j, L}\}\big)\cup
  \big(\{y_{j,1}, \ldots, y_{j,K_j-1}\} \times A\big)$$ 
  where $L < K_j - 1$ and
  $L$ is the largest value such that $|y_{L}| < |y_{K_j - 1}|$.
  That $\nu(\phi(u, v)) = \nu(uv)$ for all $u, v\in \dom(\phi)$ follows by
  the argument in the proof of Lemma~\ref{lem-fropin-1}.
\end{proof}

We have all of the ingredients to state the concurrent version of Froidure-Pin,
the validity of which is proven in Lemma~\ref{lem-final}.

\begin{algorithm}
\SetKwComment{Comment}{}{}
\DontPrintSemicolon
  \KwIn{A collection of fragments $(A, Y_1, K_1, \phi_1), \ldots, (A, Y_k, K_k,
      \phi_k)$ for a semigroup $S$ satisfying the hypothesis of
      Lemma~\ref{lem-union-fragments} and $M\in \N$.}
  \KwOut{A collection of fragments $(A, Y_1, K_1, \phi_1), \ldots, (A, Y_k, K_k,
      \phi_k)$ for a semigroup $S$ satisfying the hypothesis of
      Lemma~\ref{lem-union-fragments} and with size at least $\min\{M, |S|\}$.}
  \caption{\textsf{ConcurrentFroidurePin}}
  \label{algorithm-parallel}
    $c := \max \set{|y_{i, K_i - 1}|}{1\leq i\leq k,\ K_i > 1}\cup \{1\}$ \;
  \While{$\exists j$ $K_j \leq |Y_j|$ and $|Y_1\cup \ldots \cup Y_k| <
  M$ \label{algorithm-parallel-5} } {
     $Q_1 := \varnothing, \ldots, Q_k = \varnothing$ \;
    \For(\Comment*[f]{This can be done concurrently}){$j\in \{1, \ldots, k\}$} {
        $Q_j, (A, Y_j, K_j, \phi_j)\gets$ \textsf{ApplyGenerators}$(A,
        Y_j, K_j, \phi_j)$\;
    \label{algorithm-parallel-12}}
    \For(\Comment*[f]{This can be done concurrently}){$j\in \{1, \ldots, k\}$ 
        \label{algorithm-parallel-13}} {
        $(A, Y_j, K_j, \phi_j) \gets$\textsf{ProcessQueues}$(Q, j)$\;
    }
    \For(\Comment*[f]{This can be done concurrently}) {$j\in \{1, \ldots, k\}$ 
    \label{closure-if-left}} {
      $L_j = \max\set{i \in \N}{|y_{j, i}| < c,\ y_{j, i} \in Y_j}$\;
      \For(\Comment*[f]{if $\set{y\in Y_j}{|y| = c} = \varnothing$, then $K_j - 1 <
      L_j + 1$})
      {$i\in \{L_j + 1, \ldots, K_j - 1\}$ \label{algorithm-parallel-15}} {
        \For{$a\in A$} {
          $\phi_j(a, y_{j, i}) := \phi_{n}(\phi_{m}(a, p(y_{j,i})),
          l(y_{j,i}))$ where $p(y_{j,i}) \in Y_{m}$ and $\phi_{m}(a,
          p(y_{j,i})) \in Y_{n}$\;
        }
      }
     \label{algorithm-parallel-20}}
  \label{algorithm-parallel-21}
    $c \gets c + 1$ \Comment*[f]{$c\gets \max\set{|y_{i, K_i - 1}|}{1\leq i\leq k,\
    K_i > 1}$}\; 
    }
  \Return 
  $(A, Y_1, K_1, \phi_1), \ldots, (A, Y_k, K_k, \phi_k)$
\end{algorithm}

\begin{lem}\label{lem-final}
  If $(A, Y, K, \varnothing, \phi)$ is a snapshot for a semigroup $S$ and $M\in
  \N$, then Algorithm~\ref{algorithm-parallel} (\textsf{ConcurrentFroidurePin})
  returns a snapshot of $S$ that extends $(A, Y, K, \varnothing, \phi)$ has at
  least $\min\{M, |S|\}$ elements.
\end{lem}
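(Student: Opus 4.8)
The plan is to argue by a loop invariant for the while-loop beginning in line~\ref{algorithm-parallel-5}: at the top of each iteration the tuple $(A, Y_1, K_1, \phi_1), \ldots, (A, Y_k, K_k, \phi_k)$ satisfies the five hypotheses of Lemma~\ref{lem-union-fragments}. The bridge between the snapshot input named in the statement and the fragment input demanded by \textsf{ConcurrentFroidurePin} is supplied by that same lemma: given the snapshot $(A, Y, K, \varnothing, \phi)$, I split it into fragments by setting $Y_i$ to collect the words of $Y$ assigned to $i$ by the distribution function $b$, restricting $\phi$ accordingly, and choosing each $K_i$ so that $y_{i, K_i - 1}$ is the largest word of $Y_i$ whose right multiples are recorded; one checks directly that conditions (i)--(v) hold for this collection (the minimal collection being the base case). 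Granting that the invariant is preserved, Lemma~\ref{lem-union-fragments} reassembles the returned collection into a snapshot of $S$, and the remaining claims are read off as in Lemma~\ref{lem-fropin-2}.

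For the inductive step I would trace one pass through the while-body. First, \textsf{ApplyGenerators} is run concurrently on each fragment; by Lemma~\ref{lem-fropin-parallel-0} every $\phi_j$ stays well-defined and no read conflicts with a write, and on return each $K_j$ has advanced past all words of the length $c$ being processed while every reduced word $wa$ of the next length that is not already present has been placed into exactly one queue $Q_{b(wa)}$. Next, \textsf{ProcessQueues} is run concurrently; by Lemma~\ref{lem-process-queues-is-valid} each tuple $(A, Y_j, K_j, \phi_j)$ becomes a fragment satisfying Definition~\ref{de-fragment}(d)(ii), the queued words are absorbed into the $Y_j$ in short-lex order (so each $Y_j$ remains a set of reduced words), and the $Y_j$ stay pairwise disjoint because $b$ sends each new word to a single fragment. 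Finally the concurrent for-loop of lines~\ref{closure-if-left}--\ref{algorithm-parallel-21} computes the left multiples of the newly completed length, upgrading each fragment from case (d)(ii) to case (d)(i).

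It then remains to re-establish conditions (i)--(v). Disjointness (i) is secured by the previous paragraph, and (v) holds because every value written to a $\phi_j$ is a reduced word already shown to lie in $\bigcup_i Y_i$. For the completeness condition (ii) I would argue exactly as in case (c) of Lemma~\ref{lem-fropin-1} and the corollary to Lemma~\ref{lem-fropin-2}: any reduced word $u$ of the freshly processed length has reduced prefix $p(u)$ one shorter, hence lying in some $Y_i$ by the invariant, so $u = p(u)l(u)$ is tested during \textsf{ApplyGenerators} and is either identified with a known element or enqueued and added; consequently every reduced word up to the current length lies in $\bigcup_i Y_i$ and any other reduced word exceeds $\max \bigcup_i Y_i$. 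Condition (iii) is maintained because each $K_j$ is advanced to one past the block of words just processed, so $y_{j, K_j - 1}$ is again the largest word of its length in $Y_j$ and all fragments share a common completed length; the only subtlety here is the bookkeeping of the scalar $c$ (and, when the input snapshot is not minimal, a harmless redundant first iteration that merely re-derives known values), neither of which affects the invariant. The genuinely new point is condition (iv), the left-multiple domain, which I treat next.

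The crux, and the step I expect to be the main obstacle, is the validity of the assignment $\phi_j(a, y_{j, i}) := \phi_n(\phi_m(a, p(y_{j, i})), l(y_{j, i}))$ for the indices $i \in \{L_j + 1, \ldots, K_j - 1\}$, since its right-hand side reads values from foreign fragments and so interacts with concurrency. Here $p(y_{j, i})$ is one letter shorter than $y_{j, i}$, so $\phi_m(a, p(y_{j, i}))$ was already defined during the \emph{previous} iteration's left-multiple loop and may be read safely; writing $\phi_m(a, p(y_{j, i})) = y \in Y_n$, one has $\phi_m(a, p(y_{j, i})) \leq a\, p(y_{j, i})$, so $y$ lies among the words whose right multiples are already recorded (as $y_{j, K_j - 1}$ is the largest word of its length), whence $\phi_n(y, l(y_{j, i}))$ is defined; the identity $\nu(\phi_j(a, y_{j, i})) = \nu(a y_{j, i})$ then follows verbatim from the computation in Lemma~\ref{lem-fropin-2}. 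Establishing both that the foreign value is already present (an invariant/timing argument about the previous iteration) and that no concurrent write can disturb it is exactly the content of the safety discussion preceding Algorithm~\ref{algorithm-parallel}, and it is where the concurrent proof genuinely departs from Lemma~\ref{lem-fropin-2}. I would close by observing termination and size: the while-loop exits precisely when either $K_j > |Y_j|$ for every $j$, forcing $\bigcup_i Y_i = R$ and hence $|S|$ elements, or when $|Y_1 \cup \cdots \cup Y_k| \geq M$; since the $Y_j$ and $K_j$ only grow and each $\phi_j$ is only extended, the reassembled snapshot extends the input and has at least $\min\{M, |S|\}$ elements.
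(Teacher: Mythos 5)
Your proposal is correct and follows essentially the same route as the paper: it invokes Lemma~\ref{lem-fropin-parallel-0} and Lemma~\ref{lem-process-queues-is-valid} for the two concurrent phases, identifies the left-multiple assignment $\phi_j(a, y_{j,i}) := \phi_n(\phi_m(a, p(y_{j,i})), l(y_{j,i}))$ as the crux and justifies it by the same length bookkeeping ($|p(y_{j,i})| < c$ so the inner value is already defined, $|\phi_m(a,p(y_{j,i}))| \leq c$ so its right multiples are defined, writes only touch length-$c$ left multiples), and then reassembles via Lemma~\ref{lem-union-fragments} by checking conditions (i)--(v) exactly as the paper does. The only difference is that you make explicit the initial splitting of the input snapshot into fragments via $b$ and phrase the argument as a loop invariant, both of which the paper leaves implicit; neither changes the substance of the proof.
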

\begin{proof}
  By Lemma~\ref{lem-fropin-parallel-0} and~\ref{lem-process-queues-is-valid},
  when line~\ref{closure-if-left} is reached, every tuple $(A, Y_j, K_j, \phi_j)$
  is a fragment for $S$, and the loops applying \textsf{ApplyGenerators} and
  \textsf{ProcessQueues} can be performed concurrently.

  We will show that by the end of the for-loop started in
  line~\ref{closure-if-left},   $(A, Y_1, K_1, \phi_1), \ldots, (A, Y_k, K_k,
  \phi_k)$  is a collection of fragments satisfying the conditions of
  Lemma~\ref{lem-union-fragments}, and that the steps within the for-loop can
  be executed concurrently for each fragment. That the values assigned to
  $\phi_j$ are valid follows by the argument in the proof of
  Lemma~\ref{lem-fropin-2}.  Suppose that $j\in \{1, \ldots, k\}$ is given.
  When the for-loop started in line~\ref{closure-if-left} is complete, $(A,
  Y_j, K_j, \phi_j)$ clearly satisfies Definition~\ref{de-fragment}(a) to (c),
  since $\phi_j$ is the only component which is modified inside the for-loop.
  Hence it suffices to verify Definition~\ref{de-fragment}(d). That
  $\nu(\phi_j(u, v)) = \nu(uv)$ for all $u, v\in \dom(\phi_j)$ follows again by
  the same argument as in Lemma~\ref{lem-fropin-2}. 
  
  By Lemma~\ref{lem-process-queues-is-valid}, before $\phi_j$ is modified in
  this loop it satisfies Definition~\ref{de-fragment}(d)(ii), i.e.\
  $$\dom(\phi_j) = \big(A\times \{y_{j, 1}, \ldots, y_{j, L}\}\big)\cup
  \big(\{y_{j,1}, \ldots, y_{j,K_j-1}\} \times A\big)$$ where $L < K_j - 1$ is
  the largest value such that $|y_{L}| < |y_{K_j - 1}|$.  After the for-loop
  starting in line~\ref{algorithm-parallel-15}, 
  \begin{equation}\label{eq-1}
    \dom(\phi_j) = \big(A\times \{y_{j, 1}, \ldots, y_{j, K_j-1}\}\big)\cup
  \big(\{y_{j,1}, \ldots, y_{j,K_j-1}\} \times A\big).
  \end{equation}
  In other words, Definition~\ref{de-fragment}(d)(i) holds, and so  $(A, Y_j,
  K_j, \phi_j)$ is a fragment. 
  
  It is clear that the collection of fragments contains at least $\min\{M,
  |S|\}$ elements. Next, we show that the for-loop started in
  line~\ref{closure-if-left} can be executed concurrently. Since $|p(y_{j, i})|
  < |y_{j, i}| = c$ and so $\phi_m(a, p(y_{j, i}))$ is defined before
  line~\ref{closure-if-left}. Furthermore, $|\phi_m(a, p(y_{j, i}))| \leq
  |y_{j, i}| = c$ and by, \eqref{eq-1}, $\phi_n(w, b)$ is defined for all
  reduced $w$ of length at most $c$ and for all $b\in A$. In other words,
  $\phi_n(\phi_m(a, p(y_{j, i})), l(y_{j,i}))$ is defined before
  line~\ref{closure-if-left}.
 
  It remains to show that the collection of fragments satisfies the conditions
  in Lemma~\ref{lem-union-fragments}.
  For Lemma~\ref{lem-union-fragments}(i), 
  a reduced word $y$ belongs to $Y_j$ if and only if $b(y) = j$, and hence
  $Y_i\cap Y_j = \varnothing$ if $i \not= j$.
  To show that  Lemma~\ref{lem-union-fragments}(ii) holds, it suffices to
  show that $\bigcup_{i=1}^{k}Y_i = \set{w\in A^+}{|w| \leq |y_{N}|,\ w\ \text{is
  reduced}}$. Suppose that $w\in A ^+$ is reduced and $|w| \leq |y_{N}|$. Then
  $|p(w)| < |y_{N}|$ and so $p(w) \in \{y_{j, 1}, \ldots, y_{j, K_j - 1}\}$ for
  some $j$. Hence $(p(w), l(w)) \in \dom(\phi_j)$ and so $w = \phi_j(p(w),
  l(w)) \in \bigcup_{i=1}^{k}Y_i$.
  Within \textsf{ConcurrentFroidurePin} the values of $K_j$ are only modified
  in the calls to \textsf{ApplyGenerators}. 
  Every call to \textsf{ApplyGenerators} increases $K_j$ so
  that either $|y_{j, K_{j} - 1}| = c + 1$, or there are no words of
  length $c + 1$ in the $j$th fragment. In other words,
  Lemma~\ref{lem-fropin-parallel-0}(iii) holds.
  We showed in \eqref{eq-1} that Lemma~\ref{lem-union-fragments}(iv) holds
  and Lemma~\ref{lem-union-fragments}(v) holds trivially.  
\end{proof}

%%%%%%%%%%%%%%%%%%%%%%%%%%%%%%%%%%%%%%%%%%%%%%%%%%%%%%%%%%%%%%%%%%%%%%%%%%%%%%%%

\subsection{Experimental results}\label{subsect-parallel-experiment}

In this section we compare the original version of 
Algorithm~\ref{algorithm-froidure-pin} (\textsf{FroidurePin}) as implemented in
\libsemigroups~\cite{Mitchell2016ab}, and the concurrent version in
Algorithm~\ref{algorithm-parallel} (\textsf{ConcurrentFroidurePin}). 
The implementation of \textsf{ConcurrentFroidurePin} in
\libsemigroups~\cite{Mitchell2016ab} uses thread based parallelism using 
C++11 Standard Template Library thread objects. 

We start by comparing the number of products of elements in $S$ that are
actually computed in \textsf{FroidurePin}
and \textsf{ConcurrentFroidurePin}. In~\cite[Theorem 3.2]{Froidure1997aa}, it is
shown that the number of such products in
\textsf{FroidurePin} is $|S| + |R| - |A| - 1$ where $R$ is
the set of relations for $S$ generated by
\textsf{FroidurePin}. One of the main advantages of the
Froidure-Pin Algorithm, concurrent or not, is that it avoids multiplying
elements of $S$ as far as possible by reusing information learned
about $S$ at an earlier stage of the algorithm. This is particularly important
when the complexity of multiplying elements in $S$ is high.
\textsf{ConcurrentFroidurePin} also avoids multiplying elements of $S$, but
is more limited in its reuse of previously obtained information. The number of
products of elements $S$ depends on the number of fragments $k$ used by
\textsf{ConcurrentFroidurePin} and the function $b: A ^ * \to \{1, \ldots,
k\}$.  The \textit{full transformation monoid} $T_n$ of degree $n\in \N$
consists of all functions from $\{1, \ldots, n\}$ to  $\{1, \ldots, n\}$ under
composition of functions. It is generated by the following transformations:
\begin{equation*}
  \begin{pmatrix}
    1 & 2 & 3 & \cdots & n - 1 & n \\
    2 & 3 & 4 & \cdots & n     & 1 
  \end{pmatrix},
  \begin{pmatrix}
    1 & 2 & 3 & \cdots &  n - 1 & n \\
    2 & 1 & 3 & \cdots & n - 1 & n
  \end{pmatrix},
  \begin{pmatrix}
    1 & 2 & 3 & \cdots & n - 1 & n \\
    1 & 2 & 3 & \cdots & n - 1 & 1
  \end{pmatrix}.
\end{equation*}
We compare the number of products of elements of $S$ in
\textsf{FroidurePin} and \textsf{ConcurrentFroidurePin} for each
of  $k = 1, 2, 4, \ldots, 32$ fragments and for the full transformation monoid
of degree $n = 3, \ldots, 8$; see Figure~\ref{fig-table-products}. 
The number of products in \textsf{FroidurePin} is a lower
bound for the number in \textsf{ConcurrentFroidurePin}, and we would not
expect \textsf{ConcurrentFroidurePin} to achieve this bound. However, from
the table in Figure~\ref{fig-table-products} it can be observed that the number
of products in \textsf{ConcurrentFroidurePin} is of the same order of
magnitude as that in \textsf{ConcurrentFroidurePin}.
In~\cite{Froidure1997aa}, it was noted that there are 678223072849 entries in
the multiplication table for $T_7$ but only slightly less than a million
products are required in \textsf{FroidurePin}; we note that
only slightly more than a million products are required in
\textsf{ConcurrentFroidurePin}.

\begin{table}
  \centering
  \begin{tabular}{l||c|c|c|c|c|c}
    $n$ & 3 & 4 & 5 & 6 & 7 & 8 \\ \hline\hline

    $|T_n| = n ^ n$ &  27& 256& 3125& 46656& 823543& 16777216
    \\  \hline
    \textsf{FroidurePin} & 40 & 340 & 3877 & 54592
    & 926136 & 18285899 \\ \hline
    \textsf{ConcurrentFroidurePin} (1 fragments)& 
    45 &405 &  4535 & 66293 & 1048758 &20235231 \\\hline
    \textsf{ConcurrentFroidurePin} (2 fragments)&45 &415& 4586
    & 67835 &  1106562 & 22763829\\ \hline
    \textsf{ConcurrentFroidurePin} (4 fragments)&47 &406&4587&   67682&
    1153668 & 23093948 \\\hline
    \textsf{ConcurrentFroidurePin} (8 fragments)&46 &405&4589&   67433&
    1155484 & 23411798 \\\hline
    \textsf{ConcurrentFroidurePin} (16 fragments)&46 &402&4596&  67578&
    1153832 & 23616000 \\\hline
    \textsf{ConcurrentFroidurePin} (32 fragments)&46 &404&4563&  67755&
    1152818 & 23566915
    %TODO? instead using actual numbers, use (# mults in Froidure-Pin / # mults
    %in ConcurrentFroidurePin). 
  \end{tabular}
  \caption{Comparison of the number of products of elements in
  Algorithms~\ref{algorithm-froidure-pin} (\textsf{FroidurePin})
  and~\ref{algorithm-parallel} (\textsf{ConcurrentFroidurePin}) applied to the
  full transformation monoid $T_n$.}
  \label{fig-table-products}
\end{table}

In Figures~\ref{fig-small} and~\ref{fig-big} we plot the performance of
\textsf{ConcurrentFroidurePin} against the number of fragments it uses for
a variety of examples of semigroups $S$. As would be expected, if the semigroup
$S$ is relatively small, then there is no advantage to using
\textsf{ConcurrentFroidurePin}; see Figure~\ref{fig-small}. However, if the
semigroup $S$ is relatively large, then we see an improvement in the runtime of 
\textsf{ConcurrentFroidurePin} against
\textsf{FroidurePin}; see Figure~\ref{fig-big}
and~\ref{fig-reflex}. Note that the monoid of reflexive $5\times 5$ Boolean
matrices has $1414$ generators.

All of the computations in this section were run on a Intel Xeon CPU E5-2640 v4
2.40GHz, 20 physical cores, and 128GB of DDR4 memory.

\begin{figure}[h]
  \centering
  \includegraphics[width=0.9\textwidth]{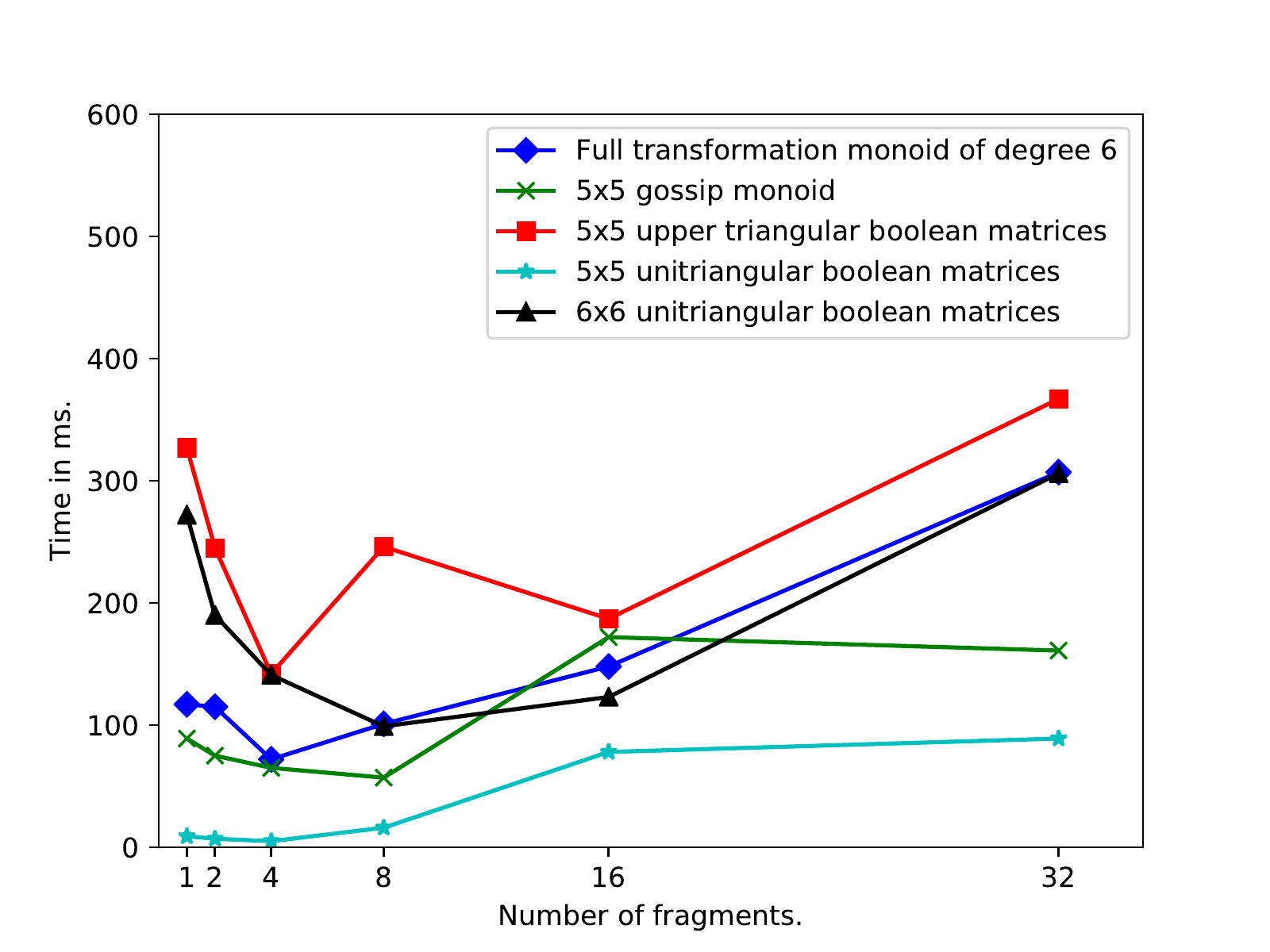}
  \caption{Run times for Algorithm~\ref{algorithm-parallel}
  (\textsf{ConcurrentFroidurePin}) against number of fragments.}
  \label{fig-small}
\end{figure}

\begin{figure}[h]
  \centering
  \includegraphics[width=0.9\textwidth]{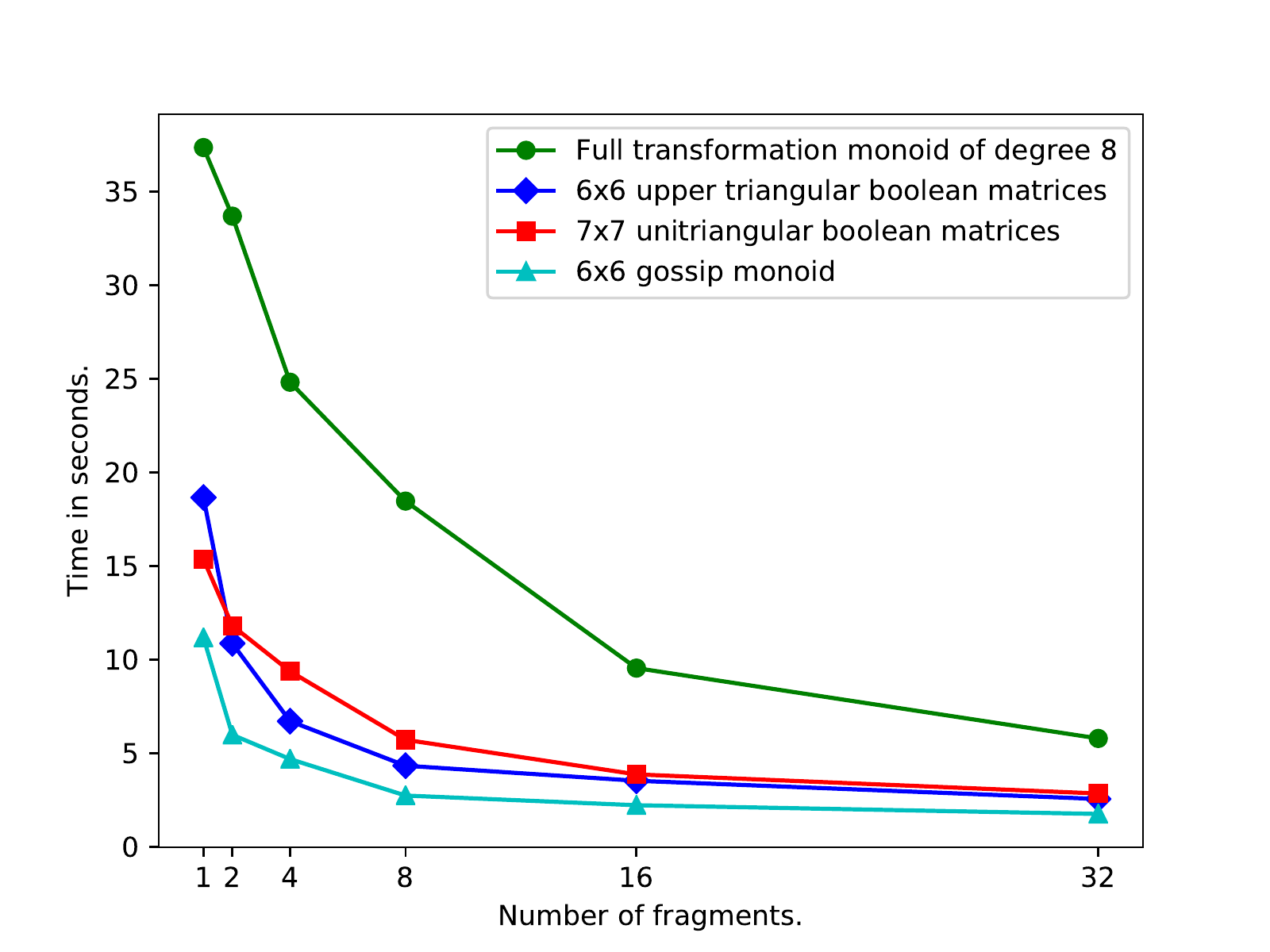}
  \caption{Run times for Algorithm~\ref{algorithm-parallel}
  (\textsf{ConcurrentFroidurePin}) against number of fragments.}
  \label{fig-big}
\end{figure}

\begin{figure}[h]
  \centering
  \includegraphics[width=0.9\textwidth]{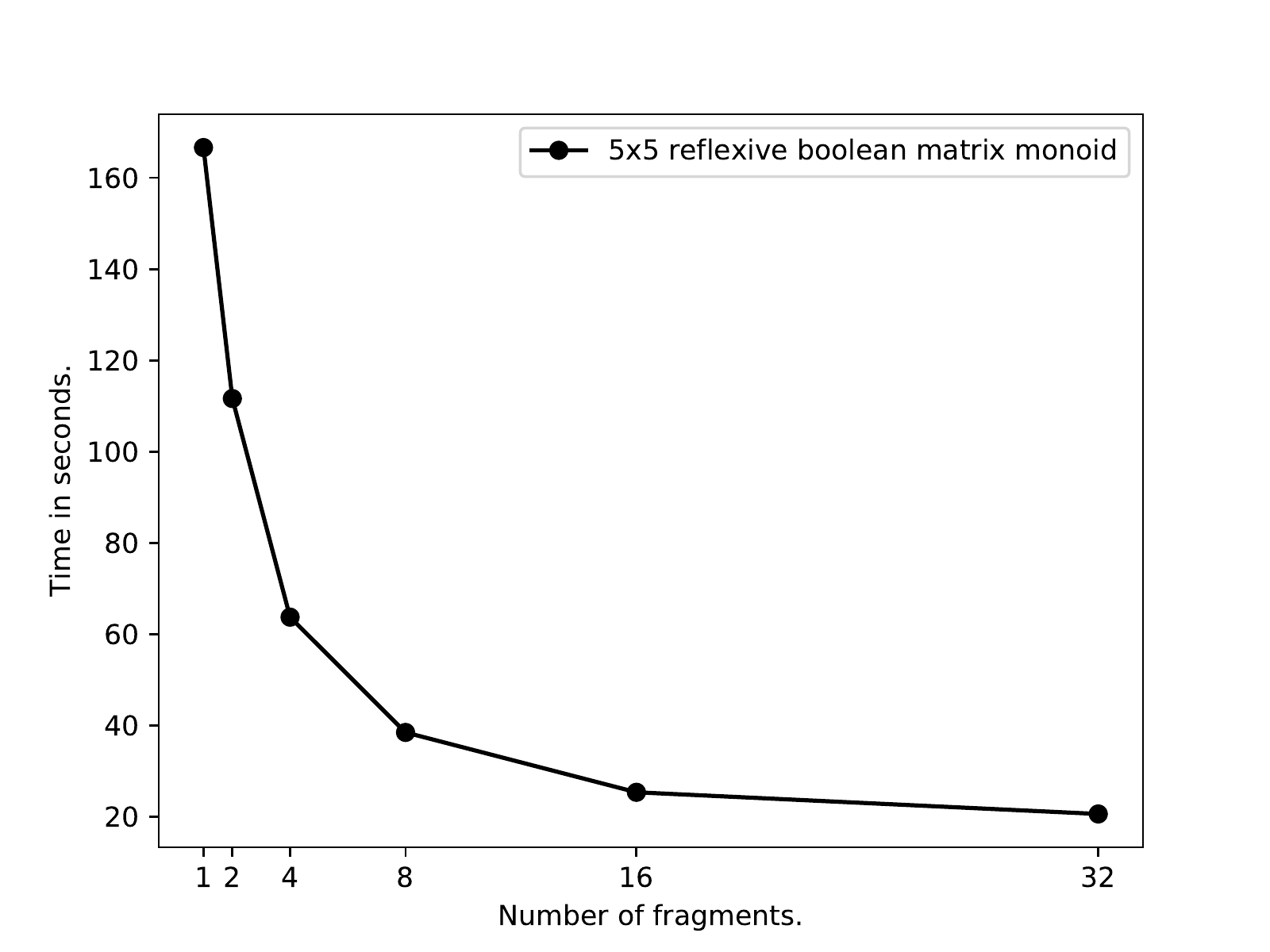}
  \caption{Run times for Algorithm~\ref{algorithm-parallel} against number of
  fragments.}
  \label{fig-reflex}
\end{figure}
\end{document}